\def\bpi{\mbox{\boldmath{$\pi$}}}
\def\btau{\mbox{\boldmath{$\tau$}}}
\def\bp{{\bf p}}
\newcommand{\Prob}{{\mathbb P}\,}
\def\Pmu{\Prob_{\!\!\mu}}
\def\Pdel{\Prob_{\!\delta}}
\newcommand{\diam}{\mbox{\rm diam}}
\newcommand{\be}{\begin{eqnarray}}
\newcommand{\ee}{\end{eqnarray}}
\def\wtil{\widetilde}
\newcommand{\half}{\frac{1}{2}}
\newcommand{\eps}{{\varepsilon}}
\newcommand{\Q}{{\mathbb Q}}
\newcommand{\Z}{{\mathbb Z}}
\newcommand{\Nat}{{\mathbb N}}
\def\N{\Nat}
\def\Lk{{\mathcal L}}
\def\Sk{{\mathcal S}}
\newcommand{\Hau}{{\mathcal H}}
\def\Hk{\Hau}
\def\Sig{\Sigma}
\newcommand{\Lam}{{\Lambda}}
\def\Gam{{\Gamma}}
\newcommand{\om}{\omega}
\newcommand{\gam}{\gamma}
\def\Exp{{\mathbb E}}
\newtheorem{theorem}{Theorem}[section]
\newtheorem{lemma}[theorem]{Lemma}
\newtheorem{corollary}[theorem]{Corollary}
\newtheorem{proposition}[theorem]{Proposition}
\theoremstyle{definition}
\theoremstyle{definition}
\numberwithin{equation}{section}
\begin{document}

\title{The Multiplicative golden mean shift has infinite Hausdorff measure}
\author{Yuval Peres}
\address{Yuval Peres, Microsoft Research, One Microsoft Way, Redmond, WA 98052, USA} \email{peres@microsoft.com}
\author{Boris Solomyak}
\address{Boris Solomyak, University of Washington, Box 354350, Dept. of Math., Seattle, WA 98195, USA}
\email{solomyak@math.washington.edu}

\begin{abstract}
In an earlier work, joint with R. Kenyon, we computed the Hausdorff dimension of the ``multiplicative golden mean shift'' defined as the set of
all reals in $[0,1]$ whose binary expansion $(x_k)$ satisfies $x_k x_{2k}=0$ for all $k\ge 1$. Here we show that this set has infinite
Hausdorff measure in its dimension. A more precise result in terms of gauges in which the Hausdorff measure is infinite is also obtained.
\end{abstract}

\maketitle

\section{Introduction}

\begin{sloppypar}
Consider the set
$$
\Xi_{G}:= \Bigl\{ x = \sum_{k=1}^\infty x_k 2^{-k}:\ x_k \in \{0,1\},\ x_k x_{2k}=0 \ \mbox{for all}\ k\Bigr\}
$$
which we call the ``multiplicative golden mean shift''. The reason for this term is that the
set of binary sequences corresponding
to the points of $\Xi_{G}$ is invariant under the action of the semigroup of multiplicative positive integers $\N^*$:
$
M_r(x_k) = (x_{rk})\ \ \mbox{for}\ r\in \N.
$
Fan, Liao, and Ma \cite{Fan} showed that
$
\dim_M(\Xi_{G}) = \sum_{k=1}^\infty
2^{-k-1}\log_2 F_{k+1}= 0.82429\ldots,
$
where $F_k$ is the $k$-th Fibonacci number: $F_1=1,\ F_2 = 2,\ F_{k+1} = F_{k-1}+F_k$, and raised the question of computing
the Hausdorff dimension of $\Xi_{G}$.
\end{sloppypar}

\begin{theorem}[\cite{KPS1,KPS2}] \label{prop-gold}
We have  $\dim_H(\Xi_{G}) < \dim_M(\Xi_{G})$. In fact,
\be \label{gold1}
\dim_H(\Xi_{G}) = -\log_2 p = 0.81137\ldots,\ \ \mbox{where}\ p^3=(1-p)^2,\ \ \ 0<p<1.
\ee
\end{theorem}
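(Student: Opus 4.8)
The plan is to pass to the symbolic picture and reduce the computation to a variational problem over measures on the one–dimensional golden mean shift, which I can solve in closed form by exploiting a self-similar recursion that produces the cubic $p^3=(1-p)^2$. I first record the symbolic reduction: points of $\Xi_{G}$ correspond bijectively (off a countable set) to admissible sequences $(x_k)$, and a dyadic interval of length $2^{-n}$ fixes exactly the prefix $x_1,\dots,x_n$, so $\dim_H(\Xi_{G})$ is governed by the cylinder structure. The decisive structural fact is multiplicative decoupling: writing each $k=2^j m$ with $m$ odd, the admissibility constraint $x_k x_{2k}=0$ becomes ``no two consecutive $1$'s'' along each chain $C_m=\{m,2m,4m,\dots\}$, and these constraints are independent across the odd ``chain labels'' $m$. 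Thus an admissible $x$ is the same as an independent choice, for each odd $m$, of a golden–mean sequence $(s^{(m)}_j)_{j\ge0}$ read along $C_m$.

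For the lower bound I would build a product measure $\mu=\bigotimes_{m\ \mathrm{odd}}\nu$, placing one and the same shift-invariant golden–mean measure $\nu$ on each chain. The point is to compute the $\mu$-a.e.\ local dimension. Writing $\ell_m(n)=\#(C_m\cap[1,n])=\lfloor\log_2(n/m)\rfloor+1$ and $H_j(\nu)$ for the conditional entropy (base $2$) of the height-$j$ symbol given heights $0,\dots,j-1$, a chainwise strong law gives
\[
-\log_2\mu([x_1\cdots x_n])=\sum_{m\ \mathrm{odd}\le n}\ \sum_{j=0}^{\ell_m(n)-1}H_j(\nu)\,(1+o(1)).
\]
Reorganizing the double sum by height and using that the asymptotic density of odd $m\le n$ with $\ell_m(n)-1\ge j$ equals $2^{-j-1}$ (consistently, $\sum_{j\ge0}2^{-j-1}=1$ since $\sum_m\ell_m(n)=n$), the a.e.\ limit is $n\sum_{j\ge0}2^{-j-1}H_j(\nu)$. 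By Billingsley's lemma for dyadic intervals this yields $\dim_H(\Xi_{G})\ge \Phi(\nu):=\sum_{j\ge0}2^{-j-1}H_j(\nu)$, so it remains to maximize $\Phi$ over golden–mean measures and set $V:=\sup_\nu\Phi(\nu)$.

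The optimization is where the cubic appears, and I would solve it by a self-similar recursion. Conditioning on the top symbol $s_0$ of a chain: with probability $w$ one has $s_0=1$, forcing $s_1=0$ and leaving a rescaled copy of the problem from height $2$ (weight $2^{-2}$); with probability $1-w$ one has $s_0=0$, leaving a rescaled copy from height $1$ (weight $2^{-1}$). This gives
\[
V=\max_{0\le w\le1}\Bigl[\tfrac12 H(w)+V\bigl(\tfrac12-\tfrac{w}{4}\bigr)\Bigr],\qquad H(w)=-w\log_2 w-(1-w)\log_2(1-w).
\]
The optimal $w$ satisfies $\log_2\frac{1-w}{w}=\tfrac{V}{2}$ together with $V=\frac{2H(w)}{2+w}$; eliminating $w$ (writing $r=2^{-V/2}$, so $w=\frac{r}{1+r}$) collapses to $r^{-2}=r+1$, i.e.\ $p^{3/2}=1-p$ with $p:=2^{-V}$, which is exactly $p^3=(1-p)^2$ and $V=-\log_2 p=0.81137\ldots$. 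Note the maximizer is a \emph{non-stationary} Markov measure (stationary transitions, but a non-stationary initial symbol), which is why $V$ strictly exceeds the maximal golden–mean entropy rate $\log_2\phi$ and why the optimal allocation is sensitive to the geometric weights $2^{-j-1}$.

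The main obstacle is the matching upper bound $\dim_H(\Xi_{G})\le V$. One cannot use coverings by cylinders of a single scale $2^{-n}$: such covers see only the counting exponent and reproduce the Minkowski value $\dim_M(\Xi_{G})=0.8243\ldots>V$, so the whole gap between box and Hausdorff dimension must be captured by genuinely multi-scale covers. The plan here is to cover $\Xi_{G}$ by admissible cylinders of non-uniform depth — equivalently, to choose a stopping rule on the tree of admissible prefixes that resolves different chains to different heights — and to show that for every $s>V$ the sum $\sum_U|U|^s$ can be driven to $0$. The bookkeeping factorizes over chains, and the Lagrangian governing the optimal stopping is dual to the entropy optimization above, so it should again single out the weight $p$ and the equation $p^3=(1-p)^2$; the difficulty is purely in making the variable-depth cover legitimate (controlling diameters and summing the infinite product over chains uniformly), and I expect this covering/duality step to be the technical heart of the proof.
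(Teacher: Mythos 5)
Your symbolic reduction, chain decomposition, and lower bound are sound, and they essentially coincide with the argument of \cite{KPS1,KPS2}: your product measure over odd chains is the measure $\Pmu$ of (\ref{eq-meas1}), your functional $\Phi(\nu)=\sum_{j\ge0}2^{-j-1}H_j(\nu)$ is the quantity denoted $s(\mu)$ there (cf.\ Lemma~\ref{lem-sum1}, where it appears in the form $\frac{2H(r)}{3-r}$ for the Markov measure $\mu(r)$), and your self-similar recursion is a legitimate and clean way to solve the variational problem. I checked the algebra: with $r=2^{-V/2}$ the first-order condition gives $w=\frac{r}{1+r}$, and substituting into the fixed-point equation $V=\frac{2H(w)}{2+w}$ yields $V=\log_2(1+r)$, hence $1+r=r^{-2}$, which is exactly $p^3=(1-p)^2$ for $p=2^{-V}=r^2$; moreover your optimal top-symbol probability $w=r^3=1-p$ matches the initial distribution $(p,1-p)$ of the optimizing (non-stationary) Markov measure $\mu(p)$ used in the paper.

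The genuine gap is the upper bound $\dim_H(\Xi_G)\le-\log_2 p$, which you do not prove: you only outline a plan (variable-depth covers chosen by a stopping rule, factorized over chains, with a duality argument expected to reproduce the cubic) and explicitly defer its ``technical heart.'' That is precisely the half of the theorem where the work lies, and your proposed route is not the one that succeeds in \cite{KPS1,KPS2} (the same mechanism is reproduced, for a finer gauge, in Section 4 of the present paper): there, no cover is ever constructed. Instead one reuses the \emph{same} optimal measure $\Pmu$ and invokes the upper half of the Rogers--Taylor density theorem (Theorem~\ref{th-density}), which only requires a \emph{pointwise} statement: for every $x\in X_G$, $\Pmu[x_1^n]$ is not too small along some sequence of scales. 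This follows from the closed-form identity (\ref{eq-pmu}): writing $a_j:=2^{-j}N_0(x_1^{2^j})\in[0,1]$, it gives $\log_2\Pmu[x_1^{2^j}]+2^js=s\,2^{j-1}(a_j-a_{j-1})$; since the increments $a_j-a_{j-1}$ have bounded partial sums (they telescope), one cannot have $a_j-a_{j-1}\le-1/j$ for all large $j$, so for infinitely many $j$ one gets $\Pmu[x_1^{2^j}]\ge 2^{-2^js-s2^{j-1}/j}$, whence $\liminf_n 2^{-n(s+\eps)}/\Pmu[x_1^n]=0$ for every $\eps>0$ and \emph{every} $x\in X_G$; Theorem~\ref{th-density} then yields $\Hk^{s+\eps}(X_G)=0$, i.e.\ $\dim_H(X_G)\le s$, and the strict inequality $\dim_H<\dim_M$ is then just a comparison of the two closed forms. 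Note that it is exactly the every-$x$ (not a.e.) nature of this estimate that makes the density theorem applicable even though $\Pmu$-typical points have the ``wrong'' local dimension in the other direction. Your covering plan, if carried out, would in effect re-prove this density theorem by hand, and the difficulty you flag (uniformly summing over the infinite product of chains at non-uniform depths) is precisely what that theorem lets you bypass.
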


Here we prove

\begin{theorem} \label{th-gauge}
{\bf (i)} The set $\Xi_G$ has infinite (not $\sigma$-finite) Hausdorff measure in its dimension.
Moreover, let $s=\dim_H(\Xi_{G})$. Then
$\Hk^\phi(\Xi_G)=\infty$ for
\be \label{eq-gauge1}
\phi(t) = t^s \exp\Bigl[ -c \frac{|\log t|}{(\log|\log t|)^2}\Bigr]
\ee
provided that $c>0$ is sufficiently small, and furthermore, $\Xi_G$ is not $\sigma$-finite with respect to $\Hk^\phi$.

{\bf (ii)} On the other hand, we have $\Hk^{\psi_\theta}(\Xi_G)=0$ for
\be \label{eq-gauge2}
\psi_\theta(t) = t^s \exp\Bigl[ -\frac{|\log t|}{(\log|\log t|)^\theta}\Bigr],
\ee
provided that $\theta<2$.
\end{theorem}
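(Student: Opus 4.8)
The plan is to work throughout with the factorization of $\Xi_G$ over chains. Writing each $k\ge 1$ uniquely as $k=2^i m$ with $m$ odd, the constraint $x_kx_{2k}=0$ decouples into independent golden mean shift constraints along the chains $C_m=\{m,2m,4m,\dots\}$; a level-$n$ cylinder $I_n$ prescribes on each chain a word with no two consecutive $1$'s of length $\ell_m(n)=\lfloor\log_2(n/m)\rfloor+1$, so the number of level-$n$ cylinders is $N_n=\prod_{m\ \mathrm{odd}\le n}F_{\ell_m(n)+1}$ and $\diam I_n=2^{-n}$. Both parts reduce to estimates on cylinder measures via the Rogers--Taylor density theorems: if a finite measure $\mu$ on $\Xi_G$ satisfies $\mu(I_n)\le\varepsilon\,\phi(2^{-n})$ for all large $n$ and every cylinder, then $\Hk^\phi(\Xi_G)\ge\mu(\Xi_G)/(C\varepsilon)$, so a single $\mu$ with $\mu(I_n)=o(\phi(2^{-n}))$ forces both $\Hk^\phi(\Xi_G)=\infty$ and, applied to an arbitrary decomposition, non-$\sigma$-finiteness. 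Conversely, exhibiting covers $\{U_i\}$ with $\sum_i\psi_\theta(\diam U_i)\to0$ gives $\Hk^{\psi_\theta}(\Xi_G)=0$.

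For part (i) one cannot use a single self-similar product of Markov measures on the chains: such a measure has local dimension $s=\dim_H(\Xi_G)$ but its $-\log_2\mu(I_n)-sn$ fluctuates at scale $\sqrt n$, so it is carried by an $\Hk^s$-null set and witnesses nothing. Instead I would build $\mu$ by equidistributing mass over cylinders along a sparse increasing sequence of scales $N_1<N_2<\cdots$ adapted to the doubling structure (a Moran-type cascade). The decisive quantity is the heaviest cylinder. Since a golden mean word is continued in a number of ways depending only on its last symbol, the $\mu$-measure of a level-$n$ cylinder is governed by $\prod_m e_m$, where $e_m$ counts the continuations of $C_m$ past its endpoint at level $n$ and is maximal precisely when that endpoint is $0$. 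Writing $\log_2 F_{\ell+1}=\ell\log_2\lambda+\mathrm{const}+O(\lambda^{-\ell})$ with $\lambda$ the golden ratio, the bulk terms reproduce $\log_2 N_n$ while the boundary constants combine to the gap $\dim_M-\dim_H$, so the heaviest cylinder has measure $2^{-sn}\exp[O(R_n)]$; the task is to choose the cascade scales so that the residual $R_n$ stays below $c\,|\log 2^{-n}|/(\log|\log 2^{-n}|)^2$ uniformly in $n$.

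For part (ii) I would instead construct a variable-scale cover directly from the continuation structure: follow each branch only until its cylinder first becomes efficient — until the accumulated boundary gain of its chains reaches a prescribed level — and cover by the resulting (different-length) cylinders. Estimating $\sum_i\psi_\theta(\diam U_i)$ through the same boundary analysis, the efficiency available from the $\approx\log_2 n$ chains, equivalently the excess $\dim_M-\dim_H$ realized at finite resolution, strictly dominates the correction $|\log t|/(\log|\log t|)^\theta$ as soon as $\theta<2$, driving the total cover cost to $0$.

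The main obstacle is the precise two-sided control of $R_n$ and the identification of $2$ as the exact threshold exponent. Both the achievability in (i) with exponent $2$ and small constant, and the failure in (ii) for every $\theta<2$, come down to optimizing the allocation of the boundary gains across the hierarchy of $\approx\log_2 n$ dyadic scales inherent in $k\mapsto 2k$; the power $(\log|\log t|)^2$ should emerge from the borderline balance in this optimization (a convexity/Cauchy--Schwarz computation over the scales). Making this rigorous — in particular controlling \emph{all} intermediate scales simultaneously for the single cascade measure in (i), rather than only the equidistribution scales $N_j$ — is where the real work lies.
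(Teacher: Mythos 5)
There is a genuine gap, and it sits at the heart of part (i): the measure you propose cannot witness the lower bound. ``Equidistributing mass over cylinders'' along any sequence of scales produces a measure of dimension \emph{strictly less} than $s$. Concretely, for the level-by-level uniform cascade each chain $J(i)$ evolves as the Markov chain that follows a $0$ by $0$ or $1$ with probability $\half$ each, i.e.\ you obtain exactly $\Pmu$ of (\ref{eq-meas1}) with $\mu=\mu(1/2)$, whose dimension is $2H(1/2)/(3-\half)=0.8$; if the scales $N_j$ are sparse, the chain marginals approach instead the uniform-continuation (Parry) measure $\mu(1/\lambda)$, $\lambda=(1+\sqrt{5})/2$, of dimension $2H(1/\lambda)/(3-1/\lambda)\approx 0.806$. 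Both are strictly below $s=-\log_2 p=0.8114\ldots$, which by the uniqueness result of \cite{KPS1,KPS2} is attained only by the Markov product with $p^3=(1-p)^2$. Hence at $\nu$-a.e.\ $x$ your cascade satisfies $\nu[x_1^n]\ge 2^{-n(s-\eps_0)}$ eventually for a fixed $\eps_0>0$, so $\phi(2^{-n})/\nu[x_1^n]\to 0$ exponentially fast: Theorem~\ref{th-density} then gives a vanishing lower bound, and Corollary~\ref{cor-dens}(ii) even shows the set carrying $\nu$ is $\Hk^\phi$-null. Your heaviest-cylinder estimate $2^{-sn}e^{O(R_n)}$ is also false: taking every chain's word to be $1010\ldots$ minimizes branching and yields a cylinder of uniform-cascade mass of order $2^{-2n/3}$. (Relatedly, your mass-distribution reduction demanding $\mu(I_n)\le\eps\,\phi(2^{-n})$ for \emph{every} cylinder is unachievable even by the correct measure: the all-zeros cylinder has $\Pdel$-mass at least $p^n=2^{-sn}$; one must use the almost-everywhere form, Corollary~\ref{cor-dens}(i).) The root misconception is that $\dim_M-\dim_H$ is a boundary-constant effect of $\log_2 F_{\ell+1}$; it is not --- it reflects the fact that counting measures at different levels are inconsistent as a cascade. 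The actual proof must start from the optimal measure $\mu(p)$ and perturb it scale by scale, $p_k=p+\delta/k$ as in (\ref{defmuk}); then, because $p$ is a critical point, the first-order term of the entropy sum $\sum_k 2^{-(k+1)}H(p_{\ell-k})F_{k-1}(p_{\ell-k})$ cancels (Lemma~\ref{lem-sum1}, resting on the recursion of Lemma~\ref{lem-poly}), and the surviving cross term $\tau\delta/\ell^2$, with $\tau=\sum_k k(HF_{k-1})'(p)2^{-(k+1)}>0$ verified by rigorous numerics (Lemma~\ref{lem-sum2}), produces the drift $\tau\delta n/(\log_2 n)^2$. This cancellation-plus-cross-term mechanism --- not a convexity or Cauchy--Schwarz optimization over scales --- is the source of the exponent $2$, and none of it is recoverable from your sketch.

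Part (ii) is likewise not an argument yet, and it leans on the same misconception (``boundary gains'' and $\dim_M-\dim_H$ are not the relevant quantities). A zero-measure statement requires covering \emph{all} of $X_G$, and the paper's proof has to treat genuinely different behaviors of individual points: the identity (\ref{equ1}), $\log_2\Pmu[x_1^{2n}]+2ns=s\bigl[N_0(x_1^{2n})/2-N_0(x_1^n)\bigr]$, reduces the gauge comparison to the integer sequence $N_0(x_1^n)$; the deterministic iteration lemma (Lemma~\ref{lem-seq}, adapted from \cite{Peres}) then splits $X_G$ into three sets $B\cup L\cup\Lam$, where $B$ is handled by Theorem~\ref{th-density} and $L,\Lam$ by explicit cylinder covers whose total $s$-dimensional cost is beaten by the Hoeffding-type estimate of Lemma~\ref{lem-ldev2}. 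The threshold $\theta=2$ is pinned down by a quantitative competition absent from your sketch: if the deficiency $N_0(x_1^{2n})/2-N_0(x_1^n)$ stays below $-2n/(\log_2 2n)^{\eta}$, the iteration forces $N_0^*(x_1^{2n})\gtrsim n/(\log_2 n)^{\eta-1}$, an event of $\Pmu$-probability $\exp\bigl(-c\,n/(\log_2 n)^{2(\eta-1)}\bigr)$, while the cover only needs to absorb a factor $\exp\bigl(O(n/(\log_2 n)^{\eta-\eps})\bigr)$; summability requires $2(\eta-1)<\eta-\eps$, which is possible exactly when $\eta<2$. The quadratic exponent $2(\eta-1)$ comes from the square in the large deviation bound, and no stopping-time cover based on ``accumulated boundary gain'' produces it; without this mechanism the identification of $2$ as the critical exponent, which you correctly flag as the main obstacle, remains unproved.
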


\noindent
{\bf Remarks.} 1. In \cite{KPS2} we have pointed out a remarkable analogy between dimension properties of
multiplicative shifts of finite type and self-affine carpets of Bedford and McMullen,
see \cite{Bedf,McMullen}, although we are not aware of any
direct connection. The stated theorem provides further
evidence of this: it exactly corresponds to Theorem 3 from the paper by the first-named author \cite{Peres}.
We should point out, however, that our proof requires many new elements; in particular, the recurrence relation from Lemma~\ref{lem-poly} 
below has no
parallels in \cite{Peres}.

2. For self-affine carpets with non-uniform horizontal fibres, there is an elegant ``soft'' argument showing that the Hausdorff
measure of the set in its dimension cannot be positive and finite \cite{LG}, and more generally, this holds for any gauge
\cite{Peres}. It would be interesting to find a similar argument for
the multiplicative golden mean shift as well.

3. We expect that similar results hold for other multiplicative shifts of finite type considered in \cite{KPS2}.
Since the proofs are quite technical, we decided to focus on the most basic example of $\Xi_G$.


\section{Preliminaries and the scheme of the proof}

It is more convenient to work in the symbolic space $\Sig_2 = \{0,1\}^\N$, with the metric
$$
\varrho((x_k) ,(y_k)) = 2^{-\min\{n:\ x_n \ne y_n\}}.
$$
It is well-known that the dimensions of a compact subset of $[0,1]$ and the corresponding set of binary digit sequences in $\Sig_2$
are equal (this is equivalent to replacing the covers by arbitrary interval with
those by dyadic intervals), and the Hausdorff measures in the gauges that we are considering are comparable, up to
a multiplicative constant. Thus, it suffices to
work with the set $X_{G}$---the collection of all binary sequences $(x_k)$ such
that $x_k x_{2k}=0$ for all $k$.
Observe that
\be \label{eq1}
X_{G} = \Bigl\{\om =
{(x_k)}_{k=1}^\infty \in \Sig_2:\ {(x_{i2^r})}_{r=0}^\infty \in \Sig_{G}\ \ \mbox{for all}\ i \ \mbox{odd}\Bigr\}
\ee
where $\Sig_{G}$ is the usual (additive) golden mean shift:
$$
\Sig_{G}:= \{{(x_k)}_{k=1}^\infty\in \Sig_2,\ x_k x_{k+1}=0,\ \ \mbox{for all}\ k\ge 1\}.
$$

We will use the Rogers-Taylor density theorem from \cite{RT}.
We state it in the symbolic space $\Sig_2$ where $[u]$ denotes the cylinder set of sequences starting with
a finite ``word'' $u$ and $x_1^n = x_1\ldots x_n$.
Given a continuous increasing function $\phi$ on $[0,\infty)$, with $\phi(0)=0$, we consider the generalized Hausdorff measure
with the gauge $\phi$, denoted by $\Hk^\phi$, see e.g.\ \cite[p.33]{Falconer} or \cite[p.50]{Rogers} for the definition and basic
properties.

\begin{theorem}[Rogers and Taylor]\label{th-density}
Let $\Prob$ be a finite Borel measure on $\Sig_2$ and let $\Lam$ be a Borel set in $\Sig_2$
such that $\Prob(\Lam)>0$. Let $\phi$ be any gauge function. If for all $x\in \Lam$,
\be \label{eq-density}
\beta_1\le \liminf_{n\to \infty} \frac{\phi(2^{-n})}{\Prob[x_1^n]} \le \beta_2
\ee
(where $\beta_1,\beta_2$ may be zero or infinity),
then
$$
c_1\beta_1\Prob(\Lam) \le \Hk^\phi(\Lam) \le c_2\beta_2 \Prob(\Lam),
$$
where $c_1$ and $c_2$ are positive and finite.
\end{theorem}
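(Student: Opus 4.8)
The plan is to establish the two displayed inequalities separately, exploiting the ultrametric structure of $\Sig_2$: any two cylinders are either disjoint or nested, and a set $U$ with $\diam U = 2^{-m}$ is contained in the length-$(m-1)$ cylinder $[x_1^{m-1}]$ of any $x\in U$, whose diameter is again $2^{-m}$. Thus in computing $\Hk^\phi$ one may assume every cover set is a cylinder, and it suffices to compare, for covers by cylinders $[x_1^n]$, the quantity $\sum\phi(2^{-n})$ with the $\Prob$-measure of the union; the one-step gap between the scale $2^{-n}$ attached to $[x_1^n]$ in the hypothesis and its actual diameter $2^{-(n+1)}$ is what the constants $c_1,c_2$ will absorb.

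For the lower bound $c_1\beta_1\Prob(\Lam)\le\Hk^\phi(\Lam)$ I would run a mass distribution argument, first upgrading the pointwise hypothesis to a uniform one. Fix $\eps>0$ and put $\Lam_N=\{x\in\Lam:\ \phi(2^{-n})\ge(\beta_1-\eps)\Prob[x_1^n]\text{ for all }n\ge N\}$. Since $x\mapsto\Prob[x_1^n]$ is constant on length-$n$ cylinders, each $\Lam_N$ is Borel, and because $\liminf_n\phi(2^{-n})/\Prob[x_1^n]\ge\beta_1$ on $\Lam$, the $\Lam_N$ increase to $\Lam$; continuity from below then gives $\Prob(\Lam_N)\uparrow\Prob(\Lam)$, so I fix $N$ with $\Prob(\Lam_N)\ge\Prob(\Lam)-\eps$. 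Given any cover of $\Lam$ by cylinders $[u_i]$ of lengths $n_i\ge N$ (legitimate by the reduction above, since $\diam<2^{-N}$ forces $n_i\ge N$), every $[u_i]$ meeting $\Lam_N$ satisfies $\Prob[u_i]\le\phi(2^{-n_i})/(\beta_1-\eps)$, so by subadditivity $\Prob(\Lam_N)\le\sum_i\Prob[u_i]\le(\beta_1-\eps)^{-1}\sum_i\phi(2^{-n_i})$. Comparing $\phi(2^{-n_i})$ with the true diameter $2^{-(n_i+1)}$ of $[u_i]$ (absorbed into $c_1$), taking the infimum over covers and letting $\delta=2^{-N}\to0$ gives $\Hk^\phi(\Lam)\ge c_1(\beta_1-\eps)\Prob(\Lam_N)\ge c_1(\beta_1-\eps)(\Prob(\Lam)-\eps)$; I then send $\eps\to0$.

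For the upper bound $\Hk^\phi(\Lam)\le c_2\beta_2\Prob(\Lam)$ I would build efficient covers out of the other half of the hypothesis. Fix $\eps,\delta>0$ and, by outer regularity of the finite measure $\Prob$, choose an open $V\supseteq\Lam$ with $\Prob(V)\le\Prob(\Lam)+\eps$. For $x\in\Lam$ the condition $\liminf_n\phi(2^{-n})/\Prob[x_1^n]\le\beta_2$ provides infinitely many $n$ with $\phi(2^{-n})\le(\beta_2+\eps)\Prob[x_1^n]$, and taking $n$ large I may also demand $[x_1^n]\subseteq V$ and $2^{-n}<\delta$; call such cylinders \emph{good}. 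They cover $\Lam$, and since cylinders are nested or disjoint the maximal good cylinders---those with no strictly shorter good prefix---form a countable \emph{disjoint} subfamily still covering $\Lam$. Summing goodness over this subfamily, $\sum\phi(2^{-|u|})\le(\beta_2+\eps)\sum\Prob[u]=(\beta_2+\eps)\Prob\bigl(\bigcup[u]\bigr)\le(\beta_2+\eps)\Prob(V)\le(\beta_2+\eps)(\Prob(\Lam)+\eps)$. As every diameter is $<\delta$, this controls $\Hk^\phi_\delta(\Lam)$; sending $\delta\to0$ and then $\eps\to0$ gives $\Hk^\phi(\Lam)\le\beta_2\Prob(\Lam)$.

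The delicate points are concentrated in the lower bound. First, the passage from the pointwise $\liminf$ to a statement holding on a set of almost full $\Prob$-measure relies on the stratification by $\Lam_N$ together with continuity from below, which is why Borel measurability of $\Lam$ (and of each $\Lam_N$) is used. Second, the positivity and finiteness of $c_1,c_2$ hinge on controlling $\phi$ across consecutive dyadic scales: the cover sum actually involves $\phi(2^{-(n+1)})$ while the hypothesis speaks of $\phi(2^{-n})$, so one needs $\phi(2^{-(n+1)})\asymp\phi(2^{-n})$, i.e.\ a mild (reverse) doubling condition on the gauge. This is harmless for the regularly varying gauges $t^s\exp[\cdots]$ of Theorem~\ref{th-gauge}, but it is the one structural assumption on $\phi$ that the argument genuinely requires. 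The upper bound, by contrast, is soft, its only ingredient beyond the covering combinatorics being outer regularity to replace $\Prob(\Sig_2)$ by $\Prob(\Lam)+\eps$.
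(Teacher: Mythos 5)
The paper does not prove Theorem~\ref{th-density} at all --- it is quoted from Rogers--Taylor \cite{RT} --- so your proposal has to stand entirely on its own. Most of it does. The reduction to cylinder covers is valid in this ultrametric space, the stratification $\Lam_N$ together with continuity from below is the right skeleton for the lower bound, and your upper bound is complete and correct for an \emph{arbitrary} gauge: the maximal good cylinders are disjoint, outer regularity gives $\Prob(V)\le\Prob(\Lam)+\eps$, and the one-scale mismatch costs nothing there because $\phi(\diam[u])=\phi(2^{-(|u|+1)})\le\phi(2^{-|u|})$ by monotonicity of $\phi$, so $c_2=1$.

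The gap is in the lower bound, exactly where you flagged it --- but your diagnosis is wrong. You assert that a doubling condition $\phi(2^{-n})\asymp\phi(2^{-(n+1)})$ is ``the one structural assumption the argument genuinely requires.'' The theorem, however, is stated for \emph{any} gauge function (and is true in that generality; gauges such as $\phi$ with $\phi(2^{-n})=2^{-n^2}$, for which consecutive-scale ratios are unbounded, are perfectly admissible, and for them your argument yields nothing). The fix is local and costs only a factor $2$: the binary branching of $\Sig_2$ does the work you wanted doubling to do. Take a cover $\{U_i\}$ of $\Lam_N$ of mesh $<2^{-N}$, say $\diam U_i=2^{-m_i}$ with $m_i>N$, and let $[u_i]$ be the enclosing cylinder of length $m_i-1$. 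Do \emph{not} apply the density estimate to $[u_i]$ itself (that is what forces you to compare $\phi$ at two consecutive scales); apply it to the at most two children $[u_i0],[u_i1]$, which have length $m_i$. For each child $[v]$ meeting $\Lam_N$, pick $x\in[v]\cap\Lam_N$; then $[v]=[x_1^{m_i}]$ and the defining property of $\Lam_N$ at $n=m_i\ge N$ gives $\Prob[v]\le\phi(2^{-m_i})/(\beta_1-\eps)$, where now $2^{-m_i}$ is exactly $\diam U_i$. Hence
$$
\Prob(\Lam_N\cap U_i)\ \le \sum_{v\in\{u_i0,\,u_i1\},\ [v]\cap\Lam_N\neq\es}\Prob[v]\ \le\ \frac{2\,\phi(\diam U_i)}{\beta_1-\eps}\,,
$$
and summing over $i$ gives $\sum_i\phi(\diam U_i)\ge\half(\beta_1-\eps)\Prob(\Lam_N)$, i.e.\ the lower bound with the absolute constant $c_1=\half$ and no hypothesis on $\phi$ whatsoever. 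Two smaller points: (a) the statement explicitly allows $\beta_1=\infty$, which is precisely the case the paper uses in Corollary~\ref{cor-dens}(i); your stratification should then be run with an arbitrary $M$ in place of $\beta_1-\eps$, letting $M\to\infty$ at the end. (b) Had doubling really been necessary, the theorem and Corollary~\ref{cor-dens} would be unproved as stated; the paper's application would survive only because the specific gauges (\ref{eq-gauge1}), (\ref{eq-gauge2}) happen to be doubling --- so the distinction is not cosmetic.
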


\begin{corollary}\label{cor-dens}
Let $\Prob$ be a finite Borel measure on $\Sig_2$ and let $\Lam$ be a Borel set in $\Sig_2$
such that $\Prob(\Lam)>0$. Let $\phi$ be any gauge function. 

{\bf (i)} If for $\Prob$-a.e.\ $x\in \Lam$
$$
\lim_{n\to \infty} (\log_2\Prob[x_1^n]-\log_2\phi(2^{-n}))=-\infty,
$$
then $\Hk^\phi(\Lam) = \infty$.

{\bf (ii)} If for all $x\in \Lam$
$$
\lim_{n\to \infty} (\log_2\Prob[x_1^n]-\log_2\phi(2^{-n}))=+\infty,
$$
then $\Hk^\phi(\Lam) = 0$.
\end{corollary}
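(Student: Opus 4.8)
The plan is to derive both parts directly from the Rogers--Taylor density theorem (Theorem~\ref{th-density}) by rewriting the logarithmic hypotheses as statements about the density ratio $\phi(2^{-n})/\Prob[x_1^n]$. Exponentiating base $2$, the quantity $\log_2\Prob[x_1^n]-\log_2\phi(2^{-n})$ is exactly $-\log_2\bigl(\phi(2^{-n})/\Prob[x_1^n]\bigr)$, so its tending to $-\infty$ (resp.\ $+\infty$) is equivalent to the density ratio tending to $+\infty$ (resp.\ $0$). Thus each hypothesis controls not merely the $\liminf$ but the full limit of the ratio, which is what makes the relevant side of the Rogers--Taylor estimate collapse to $\infty$ or $0$.

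For part (i), I would first pass to a Borel subset on which the hypothesis holds at \emph{every} point, since Theorem~\ref{th-density} requires the density condition pointwise on all of the set whereas here it is only assumed $\Prob$-a.e. Concretely, let $\Lam'\subseteq\Lam$ be the Borel set of those $x\in\Lam$ for which $\lim_{n\to\infty}(\log_2\Prob[x_1^n]-\log_2\phi(2^{-n}))=-\infty$; by hypothesis $\Prob(\Lam')=\Prob(\Lam)>0$. For each $x\in\Lam'$ the density ratio tends to $+\infty$, hence $\liminf_{n\to\infty}\phi(2^{-n})/\Prob[x_1^n]=+\infty$, so Theorem~\ref{th-density} applies with $\beta_1=\beta_2=+\infty$ and gives $\Hk^\phi(\Lam')\ge c_1\beta_1\Prob(\Lam')=\infty$. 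Monotonicity of Hausdorff measure then yields $\Hk^\phi(\Lam)\ge\Hk^\phi(\Lam')=\infty$.

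For part (ii), the hypothesis already holds for all $x\in\Lam$, and it asserts that the density ratio tends to $0$; in particular $\liminf_{n\to\infty}\phi(2^{-n})/\Prob[x_1^n]=0$. Applying Theorem~\ref{th-density} with $\beta_1=\beta_2=0$ then delivers the upper bound $\Hk^\phi(\Lam)\le c_2\beta_2\Prob(\Lam)=0$.

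I expect no serious obstacle: this is essentially a bookkeeping reformulation of the density theorem, exploiting that a genuine limit forces $\beta_1$ and $\beta_2$ to coincide. The one point requiring a little care is the passage to a full-measure subset in (i), which is forced by the mismatch between the ``$\Prob$-a.e.'' hypothesis and the ``for all $x$'' requirement in Theorem~\ref{th-density}; it is legitimate precisely because Hausdorff measure is monotone and $\Lam'$ still carries positive $\Prob$-mass.
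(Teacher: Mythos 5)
Your proof is correct and takes exactly the route the paper intends: the corollary is stated there without proof as an immediate consequence of Theorem~\ref{th-density}, and your derivation---rewriting the logarithmic hypotheses as convergence of the density ratio $\phi(2^{-n})/\Prob[x_1^n]$ to $+\infty$ (resp.\ $0$) and applying the theorem with $\beta_1=\beta_2=\infty$ (resp.\ $\beta_1=\beta_2=0$)---is precisely the bookkeeping the authors leave to the reader. Your extra care in part (i), passing to the Borel subset $\Lam'$ of full $\Prob$-measure to reconcile the ``$\Prob$-a.e.'' hypothesis with the ``for all $x$'' requirement of Theorem~\ref{th-density} and then using monotonicity of $\Hk^\phi$, is exactly the right way to handle the only nontrivial point.
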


For an odd $i$ denote by $J(i) = \{2^r i\}_{r=0}^\infty$ the geometric progression with ratio 2 starting at $i$. Equation
(\ref{eq1}) says that $x\in X_G$ if and only if the ``restriction'' of $x$ to every $J(i)$ belongs to $\Sig_G$.
We can define a measure on $X_G$ by taking an infinite product of probability measures on each ``copy'' of $\Sig_G$.

In order to compute $\dim_H(X_G)$, it was enough to take the same measure $\mu$ on each copy, see \cite{KPS1}. Given a
probability measure $\mu$ on $\Sig_{G}$, we define a probability measure on $X_{G}$ by
\be \label{eq-meas1}
\Pmu[u]:= \prod_{i\le n,\, i\ \mbox{\tiny odd}} \mu[u|_{J(i)}],
\ee
where $u|_{J(i)}$ denotes the ``restriction'' of the word $u$ to the subsequence $J(i)$.
It was proved in \cite{KPS1,KPS2} that there is a unique probability measure $\mu$ on $\Sig_G$ such that
$\dim_H(\Pmu) = \dim_H(X_G)$. Denote by $\mu(r)$ the Markov
(non-stationary) measure on $\Sig_G$,  with initial probabilities $(r,1-r)$ and the matrix of transition probabilities
$P = (P(i,j))_{i,j=0,1} = \left( \begin{array}{cc} r & \ 1-r \\ 1 & 0 \end{array} \right)$.
Then $\mu=\mu(p)$, where $p^3 = (1-p)^2$. The measure $\mu(r)$ on cylinder sets can be explicitly written as follows:
\be \label{eq-Markov}
\mu(r)([u_1\ldots u_k])=(1-r)^{N_1(u_1\ldots u_k)} r^{N_0(u_1\ldots u_{k})-N_1(u_1\ldots u_{k-1})},
\ee
where $u\in \{0,1\}^k$ is a word admissible in $\Sig_G$, i.e.\ if $u_j=1$ then $u_{j+1}=0$ for $j\le k-1$, and
$N_i(u)$ denotes the number of symbols $i$ in the word $u$. To verify (\ref{eq-Markov}), note that
the probability of a 1 is always $1-r$ (including the first
position), and the probability of a 0 is $r$, except when it follows a 1, in which case its probability equals one.

For the lower bound, i.e.\ part (i) of Theorem~\ref{th-gauge},
we have to ``fine-tune'' the measure $\Pmu$ by taking a product of measures $\mu(p_k)$ on subsequences $J(i)$ with
odd $i$ such that $2^{k}\le i < 2^{k+1}$. It is clear that we must have $\lim_{k\to \infty}p_k = p$; in fact, we will take
$p_k = p+\frac{\delta}{k}$. More precisely, let
\be \label{defmuk}
\mu_k = \mu(p_k),\ \ \mbox{where}\ \ p_k = p + \frac{\delta}{k},\ k\ge 1,\ \ p_0=p,
\ee
and $\delta>0$ is sufficiently small, so that $p_1= p+\delta<1$.
Next, we define for $u\in \{0,1\}^n$, with $2^{\ell-1} < n \le 2^\ell$,
\be \label{eq-meas2}
\Pdel[u]:= \prod_{k=1}^{\ell} \ \ \ \prod_{\frac{n}{2^{k}} < i \le \frac{n}{2^{k-1}},\ i\ \mbox{\tiny odd}}
\mu_{\ell-k}[u|_{J(i)}],
\ee
where
$
u|_{J(i)}=u_i \ldots u_{2^{k-1} i}
$
is a word of length $k$. It is easy to see that $\Pdel$ is a probability measure on $X_G$.

Without loss of generality we can (and will) use logarithms base 2 in (\ref{eq-gauge1}) and (\ref{eq-gauge2}).
Theorem~\ref{th-gauge}(i) immediately follows from Corollary~\ref{cor-dens}(i) and the following proposition.

\begin{proposition} \label{prop-gauge}
There exist constants $\delta>0$ and $c>0$ such that the measure $\Pdel$ defined by (\ref{eq-meas2})
satisfies
$$
\lim_{n\to \infty} \left(\log_2 \Pdel[x_1^n] - \log_2 \phi(2^{-n})\right) = -\infty
$$
for $\Pdel$-a.e.\ $x\in X_G$,
where $\phi$ is the gauge function from (\ref{eq-gauge1}). Equivalently,
\be \label{eq-lim2}
\lim_{n\to \infty} \left(\log_2\Pdel[x_1^n] +ns + \frac{cn}{(\log_2 n)^2}\right) = -\infty
\ee
for $\Pdel$-a.e.\ $x\in X_G$, where $s=-\log_2 p=\dim_H(X_G)$.
\end{proposition}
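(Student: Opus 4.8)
The plan is to write $Y_n:=-\log_2\Pdel[x_1^n]$ as a sum of independent contributions, one per progression $J(i)$, and to prove the proposition by separately controlling the mean $\Exp_\delta[Y_n]$ and the fluctuation $Y_n-\Exp_\delta[Y_n]$. Since the restrictions $x|_{J(i)}$ for distinct odd $i$ are $\Pdel$-independent, (\ref{eq-meas2}) gives $Y_n=\sum_{i\le n,\ i\,\mathrm{odd}}\bigl(-\log_2\mu_{\ell-k(i)}[x|_{J(i)}]\bigr)$, where $k(i)$ is the length of $J(i)\cap\{1,\dots,n\}$ and $\ell=\lceil\log_2 n\rceil$ (so $2^{\ell-1}<n\le 2^\ell$). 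Writing $h_k(r)$ for the base-$2$ Shannon entropy of the first $k$ coordinates under $\mu(r)$, and $N_k(n)$ for the number of odd $i\le n$ with $k(i)=k$, we have $\Exp_\delta[Y_n]=\sum_{k=1}^{\ell}N_k(n)\,h_k(p_{\ell-k})$. It suffices to show: (i) $\Exp_\delta[Y_n]-ns\ge c'\,n/(\log_2 n)^2$ eventually, for some $c'>c$; and (ii) $Y_n-\Exp_\delta[Y_n]=o\bigl(n/(\log_2 n)^2\bigr)$ almost surely. Together these give $Y_n-ns-c\,n/(\log_2 n)^2\to+\infty$, which by Corollary~\ref{cor-dens}(i) is exactly (\ref{eq-lim2}).

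For the mean I would first record that $N_k(n)=2^{-(k+1)}n+O(1)$, and that the entropy rate $g(r):=\sum_{k\ge1}2^{-(k+1)}h_k(r)$ equals the Hausdorff dimension of the product measure $\Pmu$ built with $\mu=\mu(r)$ on every progression, which by \cite{KPS1,KPS2} attains its maximum value $s$ at $r=p$; hence $g'(p)=\sum_k 2^{-(k+1)}h_k'(p)=0$. Substituting $p_{\ell-k}=p+\delta/(\ell-k)$ and Taylor expanding,
\be
\Exp_\delta[Y_n]-ns & = & \sum_{k=1}^{\ell}N_k(n)\bigl[h_k(p_{\ell-k})-h_k(p)\bigr]+\sum_{k=1}^{\ell}\bigl[N_k(n)-2^{-(k+1)}n\bigr]h_k(p).
\ee
The second sum is $O\bigl((\log_2 n)^2\bigr)$, since $h_k(p)\le k$ and there are $O(\log_2 n)$ terms, hence negligible. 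The first-order-in-$\delta$ part of the first sum is $\delta n\sum_k 2^{-(k+1)}h_k'(p)/(\ell-k)$; expanding $1/(\ell-k)=\ell^{-1}(1+k/\ell+\cdots)$, the coefficient of $\ell^{-1}$ is $\sum_k 2^{-(k+1)}h_k'(p)=g'(p)=0$, so the surviving contribution is at order $\ell^{-2}$, with coefficient $C_1:=\sum_k k\,2^{-(k+1)}h_k'(p)$. \textbf{This exact cancellation at order $\ell^{-1}$, together with establishing $C_1>0$, is the main obstacle:} the beneficial effect of the perturbation appears only at order $\ell^{-2}$, so both the cancellation and the sign of $C_1$ require precise, not merely asymptotic, control of the entropies $h_k$ and of the weighted sums $\sum_k 2^{-k}h_k^{(j)}(p)$. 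This is precisely where the recurrence relation of Lemma~\ref{lem-poly} enters, supplying closed expressions for $h_k(r)$ from which $g'(p)=0$ and $C_1>0$ can be verified. Granting $C_1>0$ and taking $\delta$ small (so the $O(\delta^2 n/\ell^2)$ second-order terms are dominated), we obtain $\Exp_\delta[Y_n]-ns\sim C_1\delta\,n/(\log_2 n)^2$, and (i) holds with any $c'\in(c,C_1\delta)$ provided $0<c<C_1\delta$.

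For the fluctuation I would use independence across progressions. Each summand $-\log_2\mu_{\ell-k(i)}[x|_{J(i)}]$ is nonnegative and at most $O(k(i))$, since the transition probabilities of $\mu(p_j)$ are bounded away from $0$ and $1$ uniformly over $p_j\in[p,p+\delta]$; hence $\Var_\delta(Y_n)\le\sum_k N_k(n)\,O(k^2)=O(n)$, so the standard deviation is $O(\sqrt n)=o\bigl(n/(\log_2 n)^2\bigr)$, giving (ii) in probability. To upgrade to an almost-sure statement I would exploit that $Y_n$ is nondecreasing in $n$ (cylinders nest, so $\Pdel[x_1^{n+1}]\le\Pdel[x_1^n]$). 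Set $m_n:=\Exp_\delta[Y_n]-ns-c\,n/(\log_2 n)^2\to\infty$ and $T_n:=ns+c\,n/(\log_2 n)^2$, and fix $n_j=\lfloor j^a\rfloor$ with $a>1$. Chebyshev gives
$$
\Pdel\bigl(Y_{n_j}-\Exp_\delta[Y_{n_j}]<-\tfrac12 m_{n_j}\bigr)\le \frac{4\,\Var_\delta(Y_{n_j})}{m_{n_j}^2}=O\!\left(\frac{(\log n_j)^4}{n_j}\right),
$$
which is summable, so by Borel--Cantelli $Y_{n_j}\ge T_{n_j}+\tfrac12 m_{n_j}$ eventually, $\Pdel$-a.s. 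Since $n_{j+1}/n_j\to1$, one has $T_{n_{j+1}}-T_{n_j}=O(n_j/j)=o(m_{n_j})$, so monotonicity fills the gaps: for $n_j\le n<n_{j+1}$,
$$
Y_n\ge Y_{n_j}\ge T_{n_j}+\tfrac12 m_{n_j}\ge T_n+\tfrac14 m_{n_j}\longrightarrow+\infty .
$$
(A Kolmogorov maximal inequality over each block of $n$ would replace the monotonicity step if needed.) This yields $Y_n-T_n\to+\infty$ for $\Pdel$-a.e.\ $x$, completing the argument modulo the entropy estimates of Lemma~\ref{lem-poly}.
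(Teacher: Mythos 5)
Your proposal is sound in its overall architecture, and its two halves relate to the paper's proof in different ways. The mean computation is essentially the paper's: the expansion of $\Exp_\delta[Y_n]$ via $N_k(n)=2^{-(k+1)}n+O(1)$, the first-order cancellation $\sum_k 2^{-(k+1)}h_k'(p)=0$ obtained from maximality of the entropy rate at $p$ (this is exactly Lemma~\ref{lem-sum1}, proved there by the same appeal to \cite{KPS1}), and the surviving order-$\ell^{-2}$ coefficient $C_1=\sum_k k\,2^{-(k+1)}h_k'(p)$, which is precisely the paper's constant $\tau$ from Lemma~\ref{lem-sum2}. Where you genuinely diverge is the concentration step. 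The paper first restricts to integers $n=2^{\lfloor \ell/2\rfloor}d$ (Lemma~\ref{lem-red}), discards the blocks $k>\lfloor \ell/2\rfloor$ --- permissible because each $\log_2\mu_{\ell-k}[\cdot]\le 0$ and only an upper bound on $\log_2\Pdel[x_1^n]$ is needed --- and applies Hoeffding's inequality blockwise, getting stretched-exponentially small deviation probabilities summable over all such $n$; Borel--Cantelli plus the reduction lemma then covers every $n$. You instead bound $\Var_\delta(Y_n)=O(n)$ by independence, run Chebyshev along the polynomial subsequence $n_j=\lfloor j^a\rfloor$, $a>1$ (the probabilities $O((\log n_j)^4/n_j)$ are summable), and fill the gaps using monotonicity of $Y_n$ together with $T_{n_{j+1}}-T_{n_j}=O(n_j/j)=o(m_{n_j})$. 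This is correct and arguably more elementary: it avoids Hoeffding and all the divisibility bookkeeping of Lemma~\ref{lem-red}, at the price of much weaker (polynomial rather than stretched-exponential) tail bounds, which your subsequence trick renders sufficient. Your monotonicity step is no extra liability: the paper's own Lemma~\ref{lem-red} uses exactly the same fact $\Pdel[x_1^n]\le\Pdel[x_1^m]$ for $m\le n$, so both arguments rest equally on $\Pdel$ being a genuine Borel measure, as the paper asserts.

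Two caveats, one substantive. You ``grant'' $C_1>0$, attributing it to Lemma~\ref{lem-poly}, but the recurrence and the closed form (\ref{eq-poly}) alone do not decide the sign of this alternating-in-spirit series; in the paper the positivity is a separate statement (Lemma~\ref{lem-sum2}, $\tau>0$) whose proof requires a rigorous numerical evaluation of the first twelve terms together with explicit tail bounds on $F_n(p)$ and $F_n'(p)$. This is the quantitative heart of the proposition: if the constant vanished or were negative, the perturbation $p_k=p+\delta/k$ (and conceivably the exponent $2$ in the gauge) would have to be reconsidered, so it cannot remain an unexamined hypothesis. The minor caveat: your expansion of $h_k(p_{\ell-k})$ in powers of $\delta/(\ell-k)$, and of $1/(\ell-k)$ in powers of $k/\ell$, is not uniformly valid for $k$ near $\ell$, where $\ell-k=O(1)$ and the perturbation is not small. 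The paper sidesteps this by truncating at $k\le\lfloor\ell/2\rfloor$; in your version one repairs it by estimating the blocks $k>\ell/2$ crudely via $|h_k(p_{\ell-k})-h_k(p)|\le Ck\delta$, whose total contribution to $\Exp_\delta[Y_n]$ is $O(\delta\sqrt{n}\log_2 n)$, negligible against $n/(\log_2 n)^2$.
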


For the upper bound of the Hausdorff measure, i.e.\ part (ii) of Theorem~\ref{th-gauge}, it is enough to take the same measure
$\mu=\mu(p)$ as in \cite{KPS1,KPS2}, however, the proof is rather delicate;
it follows the scheme of \cite[Theorem 3(ii)]{Peres}, but with many modifications.

We will need a classical large deviation inequality, which we state in the generality needed for us.

\begin{lemma}[Hoeffding's inequality \cite{Hoeffding}] \label{lem-ldev}
Let $\{X_i\}_{i\ge 1}$
be a sequence of independent random variables with expectation zero, such that $|X_i|\le C$, and let $S_n = \sum_{i=1}^n X_i$.
Then
\begin{equation} \label{eq-hoeffding}
\Prob\bigl( S_n \ge tn\bigr) \le \exp\Bigl(-\frac{t^2 n}{2C^2}\Bigr)
\end{equation}
for all $t>0$ and $n\ge 1$.
\end{lemma}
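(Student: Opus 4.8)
The plan is to prove the bound by the standard exponential-moment (Chernoff) method, the only substantive input being a bound on the moment generating function of a single bounded, centered variable. First I would fix $\lambda>0$ and apply Markov's inequality to the nonnegative variable $e^{\lambda S_n}$,
\[
\Prob\bigl( S_n \ge tn\bigr)=\Prob\bigl(e^{\lambda S_n}\ge e^{\lambda t n}\bigr)\le e^{-\lambda t n}\,\Exp\bigl[e^{\lambda S_n}\bigr].
\]
Since the $X_i$ are independent, the exponential moment factorizes, $\Exp[e^{\lambda S_n}]=\prod_{i=1}^n\Exp[e^{\lambda X_i}]$, so everything reduces to controlling each factor $\Exp[e^{\lambda X_i}]$ and then choosing $\lambda$ optimally at the end.

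The heart of the argument---and the step I expect to be the main obstacle---is the per-variable bound
\[
\Exp\bigl[e^{\lambda X}\bigr]\le e^{\lambda^2 C^2/2}\qquad(\lambda\in\R),
\]
valid for any mean-zero $X$ with $|X|\le C$ (Hoeffding's lemma). To prove it I would set $\psi(\lambda)=\log\Exp[e^{\lambda X}]$ and establish $\psi(\lambda)\le \lambda^2C^2/2$ by a two-term Taylor expansion with integral remainder. One computes $\psi(0)=0$ and $\psi'(0)=\Exp[X]=0$, while $\psi''(\lambda)=\Var_{\lambda}(X)$ is the variance of $X$ under the exponentially tilted law $d\Prob_\lambda\propto e^{\lambda X}\,d\Prob$; since this law is still supported in $[-C,C]$, Popoviciu's inequality gives $\psi''(\lambda)\le C^2$. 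Integrating twice then yields $\psi(\lambda)=\int_0^\lambda(\lambda-u)\psi''(u)\,du\le C^2\lambda^2/2$, which is the claim. (Alternatively one can dominate $e^{\lambda x}$ on $[-C,C]$ by the chord through the two endpoints and take expectations, killing the linear term by $\Exp[X]=0$; but the cumulant-generating-function route is cleanest and makes the role of boundedness transparent.)

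Combining the two ingredients gives
\[
\Prob\bigl( S_n \ge tn\bigr)\le \exp\!\Bigl(-\lambda t n+\tfrac12 n\lambda^2C^2\Bigr)\qquad(\lambda>0),
\]
and it remains only to optimize the exponent. Minimizing $\tfrac12\lambda^2C^2-\lambda t$ over $\lambda>0$ gives $\lambda=t/C^2$ and minimal value $-t^2/(2C^2)$, whence $\Prob( S_n \ge tn)\le\exp(-t^2n/(2C^2))$ for every $t>0$ and $n\ge1$, as required. No step beyond the single-variable moment bound uses anything more than Markov's inequality, independence, and a one-variable optimization, so the entire difficulty is concentrated in the convexity/variance estimate of the middle paragraph.
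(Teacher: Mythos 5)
Your proof is correct. Note, however, that the paper does not actually prove this lemma: it is stated as a classical result and quoted directly from Hoeffding's 1963 paper, so there is no in-paper argument to compare against. Your argument is the standard exponential-moment (Chernoff) proof, which is essentially Hoeffding's original one; the only difference is in how you establish the single-variable bound $\Exp\bigl[e^{\lambda X}\bigr]\le e^{\lambda^2C^2/2}$ --- you control $\psi''(\lambda)$ as the variance of $X$ under the exponentially tilted law and invoke Popoviciu's inequality, whereas Hoeffding dominates $e^{\lambda x}$ on the interval by the chord through its endpoints and uses convexity. For variables supported in the symmetric interval $[-C,C]$ the two routes give the identical constant $C^2/2$ in the exponent, so after optimizing at $\lambda=t/C^2$ you recover exactly the stated bound $\exp\bigl(-t^2n/(2C^2)\bigr)$; both proofs are equally elementary, and your tilting argument has the mild advantage of making the role of boundedness (through the variance bound) transparent, at the cost of assuming differentiation under the expectation, which is harmless here since $X$ is bounded.
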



\section{Lower estimates of Hausdorff measure}

Here we prove Proposition~\ref{prop-gauge}.
We start with a reduction.

\begin{lemma} \label{lem-red}
If (\ref{eq-lim2}) holds for positive integers $n$ satisfying
\be \label{n-cond}
n=2^{\lfloor \ell/2 \rfloor} d,\ \ \mbox{where}\ \ 2^{\ell-1} < n \le 2^\ell,\ d\in \N,
\ee
with a constant $c>0$, then (\ref{eq-lim2}) holds for all $n$ with $c$ replaced by $c/2$.
\end{lemma}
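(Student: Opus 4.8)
The plan is to transfer the limit \eqref{eq-lim2} from the ``nice'' lengths \eqref{n-cond} to arbitrary $n$ by comparing $\Pdel[x_1^n]$ with $\Pdel[x_1^{n^*}]$ for a nearby admissible length $n^*$. Given a large $n$ with $2^{\ell-1}<n\le 2^\ell$, I choose $n^*$ to be a multiple of $2^{\lfloor \ell/2\rfloor}$ lying in the \emph{same} dyadic block $(2^{\ell-1},2^\ell]$: round $n$ down to the nearest such multiple when one exists below $n$, and round up to $n^*=2^{\ell-1}+2^{\lfloor \ell/2\rfloor}$ for the few $n$ in the bottom sliver $(2^{\ell-1},2^{\ell-1}+2^{\lfloor\ell/2\rfloor})$. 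In either case $n^*$ satisfies \eqref{n-cond} with the same $\ell$, and $|n-n^*|<2^{\lfloor\ell/2\rfloor}\le 2^{\ell/2}\le\sqrt{2n}$. Staying inside one block is the whole point: the value of $\ell$ is then common to $n$ and $n^*$, so in the product \eqref{eq-meas2} only the index shifts coming from changes of the individual restriction lengths $k$ can occur.

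Key steps. First I observe that passing from $n^*$ to $n$ changes the restriction $u|_{J(i)}$ for at most $|n-n^*|$ odd indices $i$ --- precisely those for which $J(i)$ meets the interval between $n$ and $n^*$ --- and each such $J(i)$ gains or loses exactly one position (consecutive elements of $J(i)$ differ by a factor $2$, while the interval has ratio $<2$). Since $\ell$ is unchanged, the corresponding factor in \eqref{eq-meas2} changes from $\mu_{\ell-k}([w])$ to $\mu_{\ell-k-1}([wa])$, or vice versa. Using the explicit Markov form \eqref{eq-Markov} together with $p_k=p+\delta/k$, I bound the logarithm of each changed factor by a constant $C_1$ (from the single extra Markov transition, whose probability is bounded away from $0$) plus a term of size $O(\delta\,k/(\ell-k)^2)$ (from replacing the parameter $p_{\ell-k}$ by $p_{\ell-k-1}$, since $p_{\ell-k-1}-p_{\ell-k}=O((\ell-k)^{-2})$ and $N_0,N_1\le k$). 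Summing over the at most $\sqrt{2n}$ affected indices gives the deterministic bound
\[
\log_2\Pdel[x_1^n]\ \le\ \log_2\Pdel[x_1^{n^*}]+K\sqrt{n}
\]
for a constant $K$ independent of $x$.

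Finally I substitute this into \eqref{eq-lim2}. Writing the left-hand side for $n$ (with $c$ replaced by $c/2$) as
\[
\Bigl(\log_2\Pdel[x_1^{n^*}]+n^*s+\tfrac{c\,n^*}{(\log_2 n^*)^2}\Bigr)+(n-n^*)s+K\sqrt n+\tfrac{(c/2)n}{(\log_2 n)^2}-\tfrac{c\,n^*}{(\log_2 n^*)^2},
\]
the first bracket tends to $-\infty$ by hypothesis (for $\Pdel$-a.e.\ $x$, along the lengths \eqref{n-cond}). Since $n^*=n+O(\sqrt n)$ and $\log_2 n^*=\log_2 n+o(1)$, the last two displayed terms equal $-\bigl(\tfrac12+o(1)\bigr)\tfrac{c\,n}{(\log_2 n)^2}$, which dominates the $O(\sqrt n)$ coming from $(n-n^*)s+K\sqrt n$; hence the whole expression tends to $-\infty$. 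As the increment bound is uniform in $x$, the same full-measure set furnished by the hypothesis serves for all $n$, proving \eqref{eq-lim2} with $c/2$.

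The hard part is the second paragraph: one must guarantee the comparison never crosses a dyadic boundary (otherwise $\ell$ jumps and \eqref{eq-meas2} reindexes \emph{every} $J(i)$, producing $\Theta(n)$ changed factors and an uncontrollable error), and one must check that the accumulated index-shift contributions, summed over the $\approx\sqrt n$ affected progressions $J(i)$ of widely varying depths, stay of lower order than $n/(\log_2 n)^2$. The factor-$2$ loss in $c$ is then a comfortable margin: any factor exceeding $1$ already yields a negative leading coefficient above.
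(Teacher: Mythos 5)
Your proof is correct, but it takes a genuinely different and considerably heavier route than the paper's, and the difference is instructive. The paper also rounds $n$ down, to $m := 2^{\lfloor \ell/2\rfloor}\lfloor 2^{-\lfloor\ell/2\rfloor} n\rfloor$, so that $n-\sqrt{2n}\le n-2^{\lfloor\ell/2\rfloor}<m\le n$; but it makes no attempt to keep $m$ in the same dyadic block: it simply notes that $m$ satisfies (\ref{n-cond}) \emph{possibly with a different} $\ell$, which is harmless because the hypothesis of the lemma is that (\ref{eq-lim2}) holds along all $n$ satisfying (\ref{n-cond}), in every block. The point you missed is that no comparison of the defining products is needed at all: since $\Pdel$ is a probability measure and $[x_1^n]\subset [x_1^m]$ for $m\le n$, one has $\Pdel[x_1^n]\le \Pdel[x_1^m]$ trivially, i.e.\ your key deterministic bound holds with $K=0$ and no analysis of how (\ref{eq-meas2}) reindexes. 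The reindexing issue that drives your whole construction (same-block rounding, the round-up case on the sliver $(2^{\ell-1},2^{\ell-1}+2^{\lfloor\ell/2\rfloor})$, the per-factor bounds ``constant plus $O(\delta k/(\ell-k)^2)$'' summed over affected progressions) is rendered moot by monotonicity of the measure on nested cylinders, and the paper's proof reduces to your final arithmetic: $s(n-m)\le s\sqrt{2n}$ is dominated by $c\bigl[\frac{n/2}{(\log_2 n)^2}-\frac{m}{(\log_2 m)^2}\bigr]\le -c\,\frac{n/2-\sqrt{2n}}{(\log_2 n)^2}$. What your route buys is independence from the consistency of the family (\ref{eq-meas2}) --- you never use that $\Pdel$ really is a measure, only the formula on cylinders --- but that robustness is redundant here, since Borel--Cantelli and the Rogers--Taylor theorem require $\Pdel$ to be a measure anyway. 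What it costs is real: the round-up case needs lower bounds on conditional probabilities, and the summation of the index-shift contributions over progressions of varying depth is asserted rather than carried out (it does check out: the total is $O(\sqrt n)$, in any case $o(n/(\log_2 n)^2)$, but it is exactly the kind of bookkeeping the paper's one-line monotonicity argument avoids).
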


\begin{proof} For a large integer $n\in (2^{\ell-1},2^\ell]$, let
$$
d:= \lfloor  2^{-\lfloor \ell/2 \rfloor}n \rfloor,\ \ \ m:= 2^{\lfloor \ell/2 \rfloor} d.
$$
Then
$$
n- \sqrt{2n} \le n - 2^{\lfloor \ell/2 \rfloor} < m \le n.
$$
It is clear that $m$ satisfies (\ref{n-cond}) (possibly with a different $\ell$). Observe that
\begin{eqnarray*}
\log_2\Pdel[x_1^n] + ns + \frac{(c/2)n}{(\log_2 n)^2} & \le & \log_2\Pdel[x_1^m] + ms + \frac{cm}{(\log_2 m)^2} + \\
                                                    & +   & s(n-m) + c\Bigl[ \frac{n/2}{(\log_2 n)^2} - \frac{m}{(\log_2 m)^2}\Bigr].
\end{eqnarray*}
Since
$$
s(n-m) + c\Bigl[ \frac{n/2}{(\log_2 n)^2} - \frac{m}{(\log_2 m)^2}\Bigr] \le s\sqrt{2n} +
c\Bigl[ \frac{n/2}{(\log_2 n)^2} - \frac{n-\sqrt{2n}}{(\log_2 n)^2}\Bigr]<0
$$
for large enough $n$, the claim follows.
\qed
\end{proof}

For $k\ge 1$ let
$\alpha_k$ be the partition of $\Sig_{G}$ into cylinders of length $k$.
For a measure $\mu$ on $\Sig_2$ and a finite partition $\alpha$, denote by $H^\mu(\alpha)$ the $\mu$-entropy of the
partition, with base $2$ logarithms:
$$
H^\mu(\alpha) = -\sum_{A\in \alpha} \mu(A)\log_2\mu(A).
$$

Let $n$ be such that (\ref{n-cond}) holds.
In view of (\ref{eq-meas2}),
\be \label{eq11}
\log_2 \Pdel[x_1^n]  \le \sum_{k=1}^{\lfloor \ell/2 \rfloor}\ \ \
\sum_{\frac{n}{2^{k}} < i \le \frac{n}{2^{k-1}},\ i\ \mbox{\tiny odd}} \log_2 \mu_{\ell-k}[x_1^n|_{J(i)}].
\ee
Note that $x_1^n|_{J(i)}$ is a word of length $k$ for $i\in (n/2^{k}, n/2^{k-1}]$, with $i$ odd, which is
a beginning of a sequence in $\Sig_G$.
Thus, $[x_1^n|_{J(i)}]$ is an element of the partition $\alpha_{k}$.
The random variables $x\mapsto \log_2\mu_{\ell-k} [x_1^n|_{J(i)}]$ are i.i.d\
for $i\in (n/2^{k}, n/2^{k-1}]$, with $i$ odd,
and their expectation  equals $-H^{\mu_{\ell-k}}(\alpha_k)$, by the definition of entropy.
Note that there are $n/2^{k+1}$ odds in $(n/2^k, n/2^{k-1}]$.
It is easy to see from (\ref{eq-Markov}) and (\ref{defmuk}) that 
\be \label{estim}
\bigl|\log_2\mu_{\ell-k} [x_1^n|_{J(i)}]\bigr|\le Ck,
\ee 
for $i\in (n/2^{k}, n/2^{k-1}]$, with some $C>0$,
independent of $n$ and $k$. Let
$$
S_{n/2^{k+1}}:= \sum_{\frac{n}{2^{k}} < i \le \frac{n}{2^{k-1}},\ i\ \mbox{\tiny odd}} \log_2 \mu_{\ell-k}[x_1^n|_{J(i)}]
$$
and
$S_{n/2^{k+1}}^*:= S_{n/2^{k+1}}+\frac{n}{2^{k+1}}H^{\mu_{\ell-k}}(\alpha_k),
$
be the corresponding sum of centered (zero expectation) random variables.
Then we have, for $k=1,\ldots \lfloor \ell/2 \rfloor$, and any $\eps\in (0,\half)$,
using (\ref{estim}) in Hoeffding's inequality (\ref{eq-hoeffding}):
\begin{eqnarray*} 
& & \Pdel\left(x:\ S_{n/2^{k+1}} > \frac{-n}{2^{k+1}} H^{\mu_{\ell-k}}(\alpha_k) + \Bigl(\frac{n}{2^{k+1}}\Bigr)^{1-\eps}\right) 
\\[1.2ex] & = & \Pdel\left(x:\ S_{n/2^{k+1}}^* > \Bigl(\frac{n}{2^{k+1}}\Bigr)^{1-\eps}\right)
\\[1.2ex]
& \le & \exp\Bigl[-\frac{(n/2^{k+1})^{1-2\eps}}{2C^2 k^2} \Bigr].
\end{eqnarray*}
Denote $b_\eps = \sum_{k=1}^\infty 2^{-(k+1)\eps}$. Now it follows from (\ref{eq11}) that
\begin{eqnarray} \label{eq-dev11}
& & \Pdel\left(x:\ \log_2\Pdel[x_1^n] > -n \sum_{k=1}^{\lfloor \ell/2 \rfloor} \frac{H^{\mu_{\ell-k}}(\alpha_k)}{2^{k+1}} + b_\eps n^{1-\eps}\right) 
\\[1.2ex]
&  \le & \Pdel\left(x:\ \sum_{k=1}^{\lfloor \ell/2 \rfloor} S_{n/2^{k+1}}^* > \sum_{k=1}^{\lfloor \ell/2 \rfloor} \Bigl(\frac{n}{2^{k+1}}\Bigr)^{1-\eps}
\right)
\nonumber \\[1.2ex]
& \le & \sum_{k=1}^{\lfloor \ell/2 \rfloor} \Pdel\left(x:\ S_{n/2^{k+1}}^* > \Bigl(\frac{n}{2^{k+1}}\Bigr)^{1-\eps}\right)
\nonumber \\[1.2ex]
& \le & \sum_{k=1}^{\lfloor \ell/2 \rfloor} \exp\Bigl[-\frac{n^{1-2\eps}}{2C^2 2^{(k+1)(1-2\eps)}k^2}\Bigr] \nonumber \\[1.2ex]
& \le & \ell \exp\Bigl[-\frac{C'}{\ell^2}\Bigl(\frac{n}{2^{\lfloor \ell/2 \rfloor+1}}\Bigr)^{1-2\eps}\Bigr] \nonumber \\[1.2ex]
& \le & \log_2(2n) \exp\Bigl[-\frac{C'}{\log^2_2(2n)}\Bigl(\frac{n}{8}\Bigr)^{\half-\eps}\Bigr] \label{eq-dev},
\end{eqnarray}
where we used that $\sqrt{8n} > 2^{1+\lfloor \ell/2 \rfloor}$ and
$\ell\le \log_2(2n)$ by (\ref{n-cond}) in the last step.
Since the last expression is summable in $n$, 
it follows from Borel-Cantelli that for $\Pdel$-a.e.\ $x\in X_G$, 
the event in parentheses in Equation (\ref{eq-dev11}) holds only for finitely many $n$. This
is the set of full $\Pdel$ measure for which we will prove (\ref{eq-lim2}), for $n$ satisfying (\ref{n-cond}).

Below we let $H(r) = -r\log_2 r - (1-r)\log_2(1-r)$.

\begin{lemma} \label{lem-poly}
We have, for any $r\in (0,1)$ and the measure $\mu(r)$ defined by (\ref{eq-Markov}),
\be \label{eq-entr}
H^{\mu(r)}(\alpha_k) = H(r)F_{k-1}(r),\ k\ge 1,
\ee
where $F_0(x)=1,\ F_1(x)=1+x$, and 
\be \label{eq-recur}
F_k(x) = 1 + xF_{k-1}(x) + (1-x)F_{k-2}(x),\ k\ge 2.
\ee
Moreover, the polynomials $F_k(x)$ can be expressed as follows:
\be \label{eq-poly}
F_k(x) = \frac{(x-1)^{k+2} - (k+2)x + (2k+3)}{(x-2)^2}\,,\ \ k\ge 0.
\ee
\end{lemma}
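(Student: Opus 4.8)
The plan is to prove the three assertions in order: first derive the entropy formula (\ref{eq-entr}) while simultaneously producing a probabilistic description of the quantities $F_{k-1}(r)$, then extract the recurrence (\ref{eq-recur}), and finally solve that recurrence to obtain the closed form (\ref{eq-poly}). For Step~1 I would let $X_1,X_2,\dots$ denote the coordinate process on $\Sig_G$ under $\mu(r)$ and set $q_j:=\mu(r)(X_j=0)$. Since $\mu(r)$ is a (non-stationary) Markov measure, the chain rule for entropy together with the Markov property gives
$$
H^{\mu(r)}(\alpha_k) = H(X_1) + \sum_{j=2}^{k} H(X_j \mid X_{j-1}).
$$
Reading off the transition matrix $P$: conditioned on $X_{j-1}=0$ the next symbol is $0$ or $1$ with probabilities $r,\,1-r$, contributing $H(r)$, whereas conditioned on $X_{j-1}=1$ the move to $0$ is forced and contributes $0$. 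Hence $H(X_j\mid X_{j-1}) = q_{j-1}H(r)$, and since $H(X_1)=H(r)$ we get $H^{\mu(r)}(\alpha_k) = H(r)\bigl(1+\sum_{j=1}^{k-1} q_j\bigr)$. Defining $F_m(r):=1+\sum_{j=1}^{m} q_j$ for $m\ge 0$ then yields (\ref{eq-entr}), and one checks $F_0(r)=1$, $F_1(r)=1+q_1=1+r$, matching the stated initial data.

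For Step~2, the one-step transition yields the linear recursion $q_{j+1}=r\,q_j + 1\cdot(1-q_j) = 1-(1-r)q_j$ with $q_1=r$. Writing $x=r$ and using $F_m-F_{m-1}=q_m$, one substitutes $q_k = 1-(1-x)q_{k-1}$ into the expression $1 + xF_{k-1}(x) + (1-x)F_{k-2}(x)$; the weighted sums telescope and collapse to $1+\sum_{j=1}^{k}q_j = F_k(x)$, which is exactly (\ref{eq-recur}). (All $q_j\in(0,1)$ by an immediate induction, so there are no degeneracy issues.)

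For Step~3, (\ref{eq-recur}) is a linear inhomogeneous recurrence in $k$ with parameter $x$. Its homogeneous characteristic equation $\lambda^2 - x\lambda - (1-x)=0$ factors as $(\lambda-1)(\lambda-(x-1))=0$, so the roots are $1$ and $x-1$. Because $\lambda=1$ is a root, the constant forcing term is resonant and one looks for a particular solution linear in $k$, namely $\frac{k}{2-x}$. Thus $F_k(x)=A+B(x-1)^k+\frac{k}{2-x}$, and imposing $F_0=1$, $F_1=1+x$ gives $A=\frac{3-2x}{(x-2)^2}$ and $B=\frac{(x-1)^2}{(x-2)^2}$. Placing all terms over $(x-2)^2$ and simplifying the numerator produces (\ref{eq-poly}); alternatively, one can simply verify by induction that the right-hand side of (\ref{eq-poly}) satisfies (\ref{eq-recur}) together with $F_0=1$, $F_1=1+x$, which avoids solving the recurrence explicitly.

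I expect no deep obstacle here—the argument is a guided computation—but the one place that demands genuine care is Step~1, where the \emph{deterministic} exit from symbol $1$ must be tracked so that the conditional entropy picks up the factor $q_{j-1}$ rather than a constant. This is precisely the feature that makes $\mu(r)$ non-stationary and that forces the nonstandard, resonant recurrence (\ref{eq-recur}); getting this weighting right is the crux, after which the solution of the recurrence in Step~3 is routine.
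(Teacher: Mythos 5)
Your proof is correct, but it reaches the recurrence by a genuinely different decomposition than the paper does. The paper conditions on the partition $\alpha_1$ by the first symbol and exploits the renewal structure of $\mu(r)$ on $\Sig_G$ (after a $0$ the remaining coordinates are again distributed as $\mu(r)$; after a $1$ the next symbol is forced to be $0$ and then the law regenerates), which yields the entropy recursion $H^{\mu(r)}(\alpha_k) = H(r) + r\,H^{\mu(r)}(\alpha_{k-1}) + (1-r)\,H^{\mu(r)}(\alpha_{k-2})$; formulas (\ref{eq-entr}) and (\ref{eq-recur}) then follow together by induction, and the coefficients $x$, $1-x$ in (\ref{eq-recur}) are explained directly by this first-symbol conditioning. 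You instead apply the coordinatewise chain rule $H^{\mu(r)}(\alpha_k) = H(X_1) + \sum_{j=2}^k H(X_j \mid X_{j-1})$ and track the marginals $q_j = \mu(r)(X_j=0)$, obtaining the explicit representation $F_m(r) = 1 + \sum_{j=1}^m q_j$ (one plus the expected number of zeros among the first $m$ symbols) together with the one-step recursion $q_{j+1} = 1-(1-r)q_j$; since $F_k - F_{k-1} = q_k$, the recurrence (\ref{eq-recur}) is exactly equivalent to that recursion, and your verification is sound. It is worth noting that your route passes through the first-difference identity $F_k(x)-F_{k-1}(x) = 1-(1-x)\bigl(F_{k-1}(x)-F_{k-2}(x)\bigr)$ as a structural fact about the chain, whereas the paper invokes this same identity only as a device for guessing the closed form (\ref{eq-poly}). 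What each approach buys: the paper's renewal argument is shorter and makes the shape of (\ref{eq-recur}) transparent, while yours gives a probabilistic interpretation of the polynomials $F_k$ and an entropy formula that does not require guessing (\ref{eq-entr}) in advance. For the closed form, your resonant-root solution of the linear recurrence (roots $1$ and $x-1$, particular solution $k/(2-x)$, constants $A=\frac{3-2x}{(x-2)^2}$, $B=\frac{(x-1)^2}{(x-2)^2}$) is a complete derivation where the paper settles for an induction check; both are fine.
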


\begin{proof}
For $k=1$ the formula (\ref{eq-entr}) is trivially true. For $k\ge 2$ we have
$$
H^{\mu(r)}(\alpha_k) = H^{\mu(r)}(\alpha_1) + H^{\mu(r)}(\alpha_k|\alpha_1) = H(r) + H^{\mu(r)}(\alpha_k|\alpha_1).
$$
By the definition of conditional entropy and the properties of $\Sig_G$, we have
$$
H^{\mu(r)}(\alpha_k|\alpha_1) = r H^{\mu(r)}(\alpha_{k-1}) + (1-r)  H^{\mu(r)}(\alpha_{k-2}).
$$
(We set $H^{\mu(r)}(\alpha_0)=0$ here.) Indeed, 0 in $\Sig_G$ can be followed by an arbitrary element of 
$\Sig_G$, and 1 is followed by 0 and then by an arbitrary element of $\Sig_G$.
Now (\ref{eq-entr}) and (\ref{eq-recur}) are easily checked by induction. The explicit formula for $F_k(x)$ was found using
that 
$$
F_k(x)-F_{k-1}(x) = 1 - (1-x)(F_{k-1}(x)-F_{k-2}(x)),
$$
and can also be checked by induction.
\qed
\end{proof}

Since $\mu_{\ell-k} = \mu(p_{\ell-k})$, we have by (\ref{eq-entr}):
\be \label{eq12}
\sum_{k=1}^{\lfloor \ell/2 \rfloor} \frac{H^{\mu_{\ell-k}}(\alpha_k)}{2^{k+1}}=\sum_{k=1}^{\lfloor \ell/2 \rfloor}
 \frac{H(p_{\ell-k})F_{k-1}(p_{\ell-k})}{2^{k+1}}\,.
\ee
Recall that $p_{\ell-k}=p + \frac{\delta}{\ell-k}$.
Next we write the Taylor estimate at $p$, such that $p^3 = (1-p)^2$. We have $p\approx 0.56984>\half$, so it suffices to consider $x\in (\half,1)$. 
Below $C_i$ denote positive absolute constants.
It follows
from (\ref{eq-poly}) that 
\be \label{eq-asym1}
|F_k(x)| \le C_1k,\ \ |F'(x)|\le C_2 k,\ \ |F''(x)|\le C_3 k,\  x\in (1/2,1),\ k\ge 1.
\ee
Therefore,
\begin{eqnarray}
& & \left|\sum_{k=1}^{\lfloor \ell/2 \rfloor} \frac{H(p_{\ell-k})F_{k-1}(p_{\ell-k})}{2^{k+1}} - 
\sum_{k=1}^{\lfloor \ell/2 \rfloor} \frac{H(p)F_{k-1}(p)}{2^{k+1}} - 
 \sum_{k=1}^{\lfloor \ell/2 \rfloor} \frac{(HF_{k-1})'(p)}{2^{k+1}}\cdot \frac{\delta}{\ell-k}\right| \nonumber \\ 
& \le & C_4\sum_{k=1}^{\lfloor \ell/2 \rfloor} \frac{k}{2^{k+1}}\cdot\Bigl(\frac{\delta}{\ell-k}\Bigr)^2 \le C_5\frac{\delta^2}{\ell^2}\,. \label{eq13}
\end{eqnarray}

\begin{lemma} \label{lem-sum1}
We have
\be \label{eq-sum1}
\sum_{k=1}^\infty \frac{H(p)F_{k-1}(p)}{2^{k+1}} = s =-\log_2 p
\ee
and
\be \label{eq-sum2}
\sum_{k=1}^\infty \frac{(HF_{k-1})'(p)}{2^{k+1}} =0.
\ee
\end{lemma}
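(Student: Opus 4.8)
The plan is to reduce both identities to the single power series
$$
G(x) := \sum_{k=1}^\infty \frac{F_{k-1}(x)}{2^{k+1}},
$$
whose closed form I claim is $G(x) = \frac{2}{3-x}$ on $(\half,1)$. Once this is in hand, the left-hand side of (\ref{eq-sum1}) is simply $H(p)\,G(p)$, while the left-hand side of (\ref{eq-sum2}) becomes $H'(p)\,G(p) + H(p)\,G'(p)$ after writing $(HF_{k-1})' = H'F_{k-1} + H\,F_{k-1}'$ and summing term by term. The growth bounds (\ref{eq-asym1}) together with the geometric weights $2^{-(k+1)}$ guarantee that $G$ and the differentiated series converge locally uniformly on $(\half,1)$, so interchanging sum and derivative is legitimate and $\sum_{k} F_{k-1}'(x)/2^{k+1} = G'(x)$.

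To obtain the closed form for $G$, I would feed the recurrence (\ref{eq-recur}) directly into the series. Setting $g(x) := \sum_{j\ge 0} F_j(x)\,2^{-j} = 4\,G(x)$, splitting off the $j=0,1$ terms, and inserting $F_j = 1 + xF_{j-1} + (1-x)F_{j-2}$ for $j\ge 2$, the sums on the right reproduce $g$ with shifted indices, so that all constant contributions collapse to a single linear equation $g\cdot\frac{3-x}{4} = 2$. This gives $g(x) = \frac{8}{3-x}$ and hence $G(x)=\frac{2}{3-x}$. (Alternatively one can substitute the explicit formula (\ref{eq-poly}) and evaluate the resulting geometric and arithmetico-geometric series $\sum_j 2^{-j}$, $\sum_j j\,2^{-j}$, and $\sum_j (x-1)^j 2^{-j}$; the factor $(x-2)^2$ in the denominator cancels and the same answer emerges.)

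For (\ref{eq-sum1}), the key step is to convert $H(p)$ into a multiple of $\log_2 p$ using the defining relation $p^3=(1-p)^2$, which in logarithmic form reads $\log_2(1-p) = \tfrac32\log_2 p$. This yields
$$
H(p) = -p\log_2 p - (1-p)\log_2(1-p) = -\Bigl(p + \tfrac32(1-p)\Bigr)\log_2 p = -\frac{3-p}{2}\,\log_2 p,
$$
so that $H(p)\,G(p) = -\frac{3-p}{2}\log_2 p \cdot \frac{2}{3-p} = -\log_2 p = s$, as required.

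For (\ref{eq-sum2}), I would compute $H'(x) = \log_2\frac{1-x}{x}$ and $G'(x) = \frac{2}{(3-x)^2}$, so that the sum equals $\log_2\frac{1-p}{p}\cdot\frac{2}{3-p} + H(p)\cdot\frac{2}{(3-p)^2}$. Using $\log_2(1-p)-\log_2 p = \tfrac12\log_2 p$ in the first term and the formula for $H(p)$ above in the second, the two contributions reduce respectively to $+\frac{\log_2 p}{3-p}$ and $-\frac{\log_2 p}{3-p}$, and cancel. The only genuine subtleties are the closed-form evaluation of $G$ and the justification of termwise differentiation; everything after that is forced by the single algebraic relation $p^3=(1-p)^2$.
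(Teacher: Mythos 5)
Your proposal is correct, and every computation checks out: the generating-function identity $\sum_{k\ge 1}F_{k-1}(x)2^{-(k+1)}=\frac{2}{3-x}$ follows from the recurrence exactly as you describe, and the two identities then reduce to the algebra $\log_2(1-p)=\frac{3}{2}\log_2 p$, giving $H(p)=-\frac{3-p}{2}\log_2 p$, $H'(p)=\frac{1}{2}\log_2 p$, and the asserted cancellation. The paper shares your first step (it states the same closed form $H(r)\sum_{k\ge1}F_{k-1}(r)2^{-(k+1)}=\frac{2H(r)}{3-r}$), but then concludes differently: rather than verifying the two identities by direct substitution of $p^3=(1-p)^2$, it invokes the variational fact, imported from the earlier work \cite{KPS1}, that the function $A(r)=\frac{2H(r)}{3-r}$ attains its maximum at $r=p$ with maximum value $s(\mu(p))=-\log_2 p$; then (\ref{eq-sum1}) is the statement about the critical value and (\ref{eq-sum2}) is the first-order condition $A'(p)=0$ at an interior maximum. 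The paper's route is shorter and explains \emph{why} the identities hold (they are not coincidences: $p$ is precisely the dimension-maximizing parameter, so the linear term in the Taylor expansion of the entropy sum must vanish), but it leans on the external reference. Your route is fully self-contained and, in effect, \emph{proves} the maximality assertion the paper cites: your computation that $(HG)'(p)=0$ under $p^3=(1-p)^2$ is exactly the verification that $p$ is a critical point of $A$, and your evaluation $H(p)G(p)=-\log_2 p$ identifies the critical value. Your remark on justifying termwise differentiation via (\ref{eq-asym1}) and the geometric weights is a point the paper passes over silently, and it is handled correctly.
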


\begin{proof}
One can verify directly that $A(r):=H(r)\sum_{k=1}^\infty \frac{F_{k-1}(r)}{2^{k+1}} = \frac{2H(r)}{3-r}$, and this function achieves its maximum at $p$.
Alternatively, this follows from \cite{KPS1}, since $A(r)$ equals what was denoted $s(\mu)$ in  \cite{KPS1}, for $\mu=\mu(r)$. 
\qed
\end{proof}

In view of (\ref{eq-asym1}), we have $|\sum_{k={\lfloor \ell/2 \rfloor}+1}^\infty \frac{H(p)F_{k-1}(p)}{2^{k+1}}|\le 
C_6\ell\cdot 2^{-\ell/2}$, 
hence (\ref{eq-sum1}) implies
\be \label{eq14}
\left|\sum_{k=1}^{\lfloor \ell/2 \rfloor} \frac{H(p)F_{k-1}(p)}{2^{k+1}} - s\right| \le C_6 \ell\cdot 2^{-\ell/2}.
\ee
Next, writing $\frac{1}{\ell-k} = \frac{1}{\ell} + \frac{k}{\ell^2} + \frac{k^2}{\ell^2(\ell-k)}$, we obtain
\begin{eqnarray*}
& & \sum_{k=1}^{\lfloor \ell/2 \rfloor} \frac{(HF_{k-1})'(p)}{2^{k+1}}\cdot \frac{\delta}{\ell-k}\\ 
& = & \frac{\delta}{\ell} \sum_{k=1}^{\lfloor \ell/2 \rfloor} \frac{(HF_{k-1})'(p)}{2^{k+1}}
+ \frac{\delta}{\ell^2} \sum_{k=1}^{\lfloor \ell/2 \rfloor} \frac{k(HF_{k-1})'(p)}{2^{k+1}}
+  \frac{\delta}{\ell^2} \sum_{k=1}^{\lfloor \ell/2 \rfloor} \frac{k^2(HF_{k-1})'(p)}{2^{k+1}(\ell-k)} \\
& =: & S_1 + S_2 + S_3. 
\end{eqnarray*}
Using (\ref{eq-asym1}), by (\ref{eq-sum2}) we have
\be \label{eq15}
|S_1| \le \frac{\delta}{\ell} \left|\sum_{k={\lfloor \ell/2 \rfloor}+1}^\infty \frac{(HF_{k-1})'(p)}{2^{k+1}}\right|
\le C_7 \frac{\delta}{\ell}\cdot \frac{\ell}{2^{\ell/2}} = \frac{C_7\delta}{2^{\ell/2}}\,,
\ee
and 
\be \label{eq16}
|S_3| \le C_8 \,\frac{\delta}{\ell^3}.
\ee
Finally,
\be \label{eq17}
\left|S_2 - \frac{\delta}{\ell^2} \sum_{k=1}^\infty \frac{k(HF_{k-1})'(p)}{2^{k+1}}\right| \le C_9 \frac{\delta}{\ell^2}\cdot \frac{\ell^2}{2^{\ell/2}} = 
\frac{C_9\delta}{2^{\ell/2}}\,.
\ee

\begin{lemma} \label{lem-sum2} We have
$$
\tau:= \sum_{k=1}^\infty \frac{k(HF_{k-1})'(p)}{2^{k+1}}>0.
$$
\end{lemma}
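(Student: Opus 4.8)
The plan is to recognize $\tau$ as the value at $r=p$ of the derivative of a single explicit function, and then to exploit the defining relation $p^3=(1-p)^2$ together with a generating-function closed form. Concretely, set
\[
\Psi(r):=\sum_{k=1}^\infty \frac{k\,H(r)F_{k-1}(r)}{2^{k+1}},
\]
so that, differentiating term by term in $r$, one has $\tau=\Psi'(p)$. Term-by-term differentiation is legitimate because the bounds $|F_k|,|F_k'|,|F_k''|\le C_1 k$ from (\ref{eq-asym1}), together with the smoothness of $H$ on compact subsets of $(0,1)$, make the differentiated series converge uniformly near $p$. Thus everything reduces to finding a closed form for $\Psi$ and differentiating it once.

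To obtain the closed form I would first compute the generating function $g(z;r):=\sum_{k\ge 0}F_k(r)z^k$. Feeding the recurrence (\ref{eq-recur}) into the sum gives
\[
g(z;r)\bigl(1-rz-(1-r)z^2\bigr)=\frac{1}{1-z},
\]
and since $1-rz-(1-r)z^2=(1-z)\bigl(1+(1-r)z\bigr)$ this simplifies to the remarkably clean
\[
g(z;r)=\frac{1}{(1-z)^2\bigl(1+(1-r)z\bigr)}.
\]
As a check, $\tfrac14 g(\tfrac12;r)=\tfrac{2}{3-r}$ recovers Lemma~\ref{lem-sum1}. The extra factor $k$ is then handled by $\sum_{j\ge 0}(j+1)F_j(r)z^j=\frac{d}{dz}\bigl(z\,g(z;r)\bigr)$; after reindexing $j=k-1$ and evaluating at $z=\half$ one gets $\sum_{k\ge1}\frac{kF_{k-1}(r)}{2^{k+1}}=\frac{4(4-r)}{(3-r)^2}$, hence
\[
\Psi(r)=\frac{4(4-r)}{(3-r)^2}\,H(r).
\]

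It remains to differentiate and insert the special value $p$. Writing $R(r)=\frac{4(4-r)}{(3-r)^2}$, a direct computation gives $R'(r)=\frac{4(5-r)}{(3-r)^3}$, while $H'(r)=\log_2\frac{1-r}{r}$. The relation $p^3=(1-p)^2$ yields $\log_2(1-p)=-\tfrac32 s$ and $\log_2 p=-s$, so that $H(p)=\frac{s(3-p)}{2}$ and $H'(p)=-\frac{s}{2}$. Substituting into $\tau=\Psi'(p)=H'(p)R(p)+H(p)R'(p)$, the awkward factors $(4-p)$ and $(5-p)$ cancel against each other and one is left with
\[
\tau=\frac{2s}{(3-p)^2}>0,
\]
which is the assertion.

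The computations are essentially routine, so genuine care is needed only in two places. First, one must justify interchanging differentiation with the infinite sum, which is the role of (\ref{eq-asym1}). Second, the generating-function bookkeeping with the factor $k$ is the step most prone to error, so I would double-check it by confirming the $k=1$ term and the reindexing are consistent, and by verifying that $R(p),R'(p)$ reproduce the numerical value $\tau\approx 0.275$. The final cancellation leaving $\tau=2s/(3-p)^2$ is what makes the positivity transparent; accordingly, I regard isolating the clean form of $g(z;r)$ as the crux of the argument, after which the conclusion is immediate.
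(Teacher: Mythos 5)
Your proof is correct, and it takes a genuinely different route from the paper. The paper's own argument is brute force: a rigorous numerical evaluation (via \emph{Mathematica}) of the first twelve terms of the series, combined with the crude bounds $0<F_n(p)<3+3n/2$ and $|F_n'(p)|<3n+6$ extracted from the closed form (\ref{eq-poly}) to show that the tail $\sum_{k\ge 13}$ is smaller in absolute value than the computed head. You instead sum the series exactly: the generating function identity $\sum_{k\ge 0}F_k(r)z^k=(1-z)^{-2}\bigl(1+(1-r)z\bigr)^{-1}$ does follow from (\ref{eq-recur}) as you claim, the evaluation of $\frac{d}{dz}\bigl(zg(z;r)\bigr)$ at $z=\half$ gives $\sum_{k\ge 1}kF_{k-1}(r)2^{-k-1}=\frac{4(4-r)}{(3-r)^2}$ (I verified this, and it is consistent with Lemma~\ref{lem-sum1}), and the relations $\log_2 p=-s$, $\log_2(1-p)=-\tfrac{3s}{2}$ coming from $p^3=(1-p)^2$ indeed produce the exact cancellation yielding $\tau=\frac{2s}{(3-p)^2}>0$. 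Term-by-term differentiation is legitimately covered by (\ref{eq-asym1}). What your approach buys is an exact closed form, strictly stronger than positivity, with no reliance on machine computation; what the paper's approach buys is robustness, since it applies equally to perturbed sums where no clean generating function is available (e.g.\ the quantity $\tau_\gamma$ with weight $k^{1+\gamma}$ in the paper's subsequent remark). One point of reconciliation on your proposed numerical sanity check: your value $\tau\approx 0.275$ appears to conflict with the paper's partial sum $\approx 0.187469$, but the discrepancy is not yours --- the paper's numerics (both $H(p)\approx 0.68336$ and $H'(p)\approx -0.281198$) were evidently computed with natural logarithms, whereas $H$ is defined with base-$2$ logarithms; the two normalizations of $\tau$ differ by the factor $\ln 2$, and indeed $0.275\cdot\ln 2\approx 0.19$, so your value is the one consistent with the stated definition and positivity is unaffected either way.
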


The proof uses a (rigorous) numerical calculation, and we postpone it to the end of the section.
Combining (\ref{eq12}), (\ref{eq13}), (\ref{eq14}), (\ref{eq15}), (\ref{eq16}), and (\ref{eq17}), we obtain
\be \label{eq-sum3}
\left|\sum_{k=1}^{\lfloor \ell/2 \rfloor} \frac{H^{\mu_{\ell-k}}(\alpha_k)}{2^{k+1}} - s - \frac{\tau\delta}{\ell^2}\right|
\le \frac{C_5\delta^2}{\ell^2} + \frac{C_6\ell}{2^{\ell/2}}+ \frac{(C_7+C_9)\delta}{2^{\ell/2}} + \frac{C_8\delta}{\ell^3}\,.
\ee
Now we can conclude the proof of the proposition. Let $x\in X_G$ be such that for all $n$ sufficiently large, satisfying (\ref{n-cond}), we have
$$
\log_2\Pdel[x_1^n] \le -n \sum_{k=1}^{\lfloor \ell/2 \rfloor} \frac{H^{\mu_{\ell-k}}(\alpha_k)}{2^{k+1}} + b_\eps n^{1-\eps}.
$$
Recall that this holds for $\Pdel$-a.e.\ $x$ by (\ref{eq-dev}) and Borel-Cantelli Lemma. Then from (\ref{eq-sum3})
we obtain, keeping in mind that $n\in (2^{\ell-1},2^\ell]$:
\begin{eqnarray*}
\Sk_n(x)
&:= & \log_2\Pdel[x_1^n] + ns + \frac{cn}{(\log_2 n)^2}  \\
& \le & \frac{cn}{(\log_2 n)^2} -\tau \delta \frac{n}{(\log_2 n)^2} + b_\eps n^{1-\eps} + \frac{C_5\delta^2 n}{(\log_2 n)^2} \\
& +& C_6\sqrt{n}\log_2 n + (C_7+C_9)\delta \sqrt{n} + \frac{C_8\delta n}{(\log_2 n)^3}.
\end{eqnarray*}
Now we choose a positive $\delta< \frac{\tau}{3C_5}$, which is possible by Lemma~\ref{lem-sum2},
so that $C_5\frac{\delta^2 n}{(\log_2 n)^2}< \frac{1}{3}  \frac{\tau \delta n}{(\log_2 n)^2}$, 
and then choose
$c\in (0,\tau \delta/3)$, whence
$$
\frac{cn}{(\log_2 n)^2}< \frac{1}{3}\cdot \tau \delta \frac{n}{(\log_2 n)^2}.
$$
Then
$$
\Sk_n(x) \le -\frac{1}{3}  \frac{\tau \delta n}{(\log_2 n)^2} + b_\eps n^{1-\eps} 
+C_6\sqrt{n}\log_2 n + (C_7+C_9)\delta \sqrt{n} + \frac{C_8\delta n}{(\log_2 n)^3} \to -\infty,
$$
as $n\to\infty$,
and (\ref{eq-lim2}) follows.
\qed

\medskip

\noindent {\em Proof of Theorem~\ref{th-gauge}(i).} As already mentioned, $\Hk^\phi(\Xi_G) = \Hk^\phi(X_G)=\infty$ follows from the Rogers-Taylor density
theorem (more precisely, from Corollary~\ref{cor-dens}(i)).
If $\Hk^\phi|_{\Xi_G}$ was $\sigma$-finite for some $c>0$, we would have $\Hk^\phi(\Xi_G)=0$ for all larger values of $c$, which is a contradiction.
\qed

\medskip

\noindent {\bf Remark.}
It is clear, without any calculation, that there exists $\gam>0$, arbitrarily small, such that
$$
\tau_\gam:=\sum_{k=1}^\infty \frac{k^{1+\gam} (HF_{k-1})'(p)}{2^{k+1}} \ne 0.
$$
This implies, by a minor modification of the argument, that $\Hk^{\phi_\gam}(X_G)=\infty$ for the gauge function
$$
\phi_\gam(t) = t^s \exp\Bigl[ -c \frac{|\log t|}{(\log|\log t|)^{2+\gam}}\Bigr].
$$
To this end, we need to take $p_k = p \pm \frac{\delta}{k^{1+\gam}}$ in (\ref{defmuk}), where the sign is that of $\tau_\gam$.
The details are left to the reader.

\medskip

\begin{proof}[Proof of Lemma~\ref{lem-sum2}]
A numerical calculation (we used {\em Mathematica}) showed that
$$
\sum_{k=1}^{12} \frac{k (HF_{k-1})'(p)}{2^{k+1}} \approx 0.187469.
$$
Thus, we only need to estimate the remainder.

We have $(HF_{k-1})'(p) = H(p)F_{k-1}'(p) + H'(p) F_{k-1}(p)$.
Recall that $p\approx 0.56984$, and a calculation gives
$$
H(p) \approx 0.68336 < 0.7,\ \ \ \ H'(p) \approx - 0.281198,\ \ \mbox{hence}\ \ |H'(p)| < 0.3.
$$
Recall (\ref{eq-poly}) that
$
F_n(x) = (x-2)^{-2} [(x-1)^{n+2} - (n+2)x + (2n+3)],
$
whence
$$
0 < F_n(p) < 2^{-(n+2)} - (n+2)/2 + (2n+3) < 3+3n/2.
$$
Further,
$$
F_n'(p) = \frac{2((p-1)^{n+2} - (n+2)p + (2n+3))}{(p-2)^3} + \frac{(n+2)(p-1)^{n+1} - (n+2)}{(p-2)^2}\,.
$$
Note that in the expression for $F'_n(p)$ the 1st term is positive and the 2nd term is negative. The first term, in absolute value, is less than
$2(3+3n/2) = 3n+6$, and the second term, in absolute value, is less than $n+3$ for $n\ge 1$. Thus,
$$
|F'_n(p)| < 3n+6.
$$
It follows (using a crude estimate) that
$$
\Bigl|\sum_{k=13}^\infty \frac{k(HF_{k-1})'(p)}{2^{k+1}} \Bigr| < \sum_{k=13}^\infty \frac{(0.7(3k+3) + 0.3(3k+3)/2)k}{2^{k+1}}<
 \sum_{k=13}^\infty \frac{3k(k+1)}{2^{k+1}}\,.
$$
Finally,
\begin{eqnarray*}
\sum_{k=13}^\infty \frac{3k(k+1)}{2^{k+1}} & = & (3/4) [(1-x)^{-1}x^{15}]''|_{x=1/2}
\\ &  = & (3/4)[2^{-11} + 30\cdot 2^{-12} + 15\cdot 14\cdot 2^{-12}] < 0.1 < 0.187469,
\end{eqnarray*}
completing the proof of the claim that $\sum_{k=1}^\infty \frac{k(HF_{k-1})'(p)}{2^{k+1}}>0$.
\qed \end{proof}


\section{Upper bound for Hausdorff measure}

First we give a short proof of a weaker result: $\Hk^\psi(X_G)=0$ where
\be \label{eq-gauge3}
\psi(t) = t^s \exp\Bigl[ -\frac{|\log_2 t|}{g(\log_2|\log_2 t|)}\Bigr]
\ee
where $g$ is increasing and $\int^\infty \frac{dt}{g(t)}=\infty$; in particular, this includes $\psi=\psi_\theta$ from (\ref{eq-gauge2}) with $\theta=1$.

\begin{proof}
We use the measure $\Pmu$ from (\ref{eq-meas1}), where $\mu=\mu(p)$, $p^3=(1-p)^2$, as in \cite{KPS1}. Consider {\bf any} point
$x\in X_G$. Then we obtain from
(\ref{eq-meas2}), as in \cite{KPS1}, for $n$ even:
\begin{eqnarray} \label{eq-pmu}
\Pmu[x_1^n] & = & (1-p)^{N_1(x_1^n)} p^{N_0(x_1^n)-N_1(x_1^{n/2})} \nonumber \\
            & = & p^n p^{N_0(x_1^{n/2})-N_0(x_1^n)/2},
\end{eqnarray}
in view of $1-p=p^{3/2},\ N_1(x_1^n)=n-N_0(x_1^n)$.
Note that $\log_2 \psi(2^{-n}) = -ns - \frac{n}{(\ln 2)g(\log_2 n)}$. In view of $s = -\log_2 p$, we have
\be \label{eq-upper}
\frac{\log_2 \Pmu[x_1^n]-\log_2 \psi(2^{-n})}{n} =
\frac{s}{2} \Bigl(\frac{N_0(x_1^{n/2})}{n/2} - \frac{N_0(x_1^n)}{n} \Bigr) + \frac{1}{(\ln 2) g(\log_2 n)}\,.
\ee
Denote
$$
b_j:= \frac{\log_2 \Pmu[x_1^{2^j}] - \log_2 \psi(2^{-2^j})}{2^j} = \frac{s}{2}\Bigl(\frac{N_0(x_1^{2^{j-1}})}{2^{j-1}} -
\frac{N_0(x_1^{2^j})}{2^j} \Bigr) + \frac{1}{(\ln 2) g(j)}\,.
$$
Then
$$
b_1 + \cdots + b_\ell =
\frac{s}{2} \Bigl(N_0(x_1^1)- \frac{N_0(x_1^{2^\ell})}{2^\ell}\Bigr) + \sum_{j=1}^\ell \frac{1}{(\ln 2) g(j)}\to +\infty,\ \ell\to \infty,
$$
by the assumption on the function $g$.
It follows that $\limsup 2^j b_j = +\infty$, hence
$$
\limsup_{n\to \infty} (\log_2 \Pmu[x_1^n]-\log_2 \psi(2^{-n})) = +\infty,
$$
and we obtain $\Hk^\psi(X_G) = 0$ by 
Corollary~\ref{cor-dens}(ii).
\qed \end{proof}

\medskip
                                          
Obtaining the same result for $\psi_\theta$ from (\ref{eq-gauge2}) with $1< \theta<2$ is more delicate. Our proof follows the scheme of the proof of
\cite[Theorem 3(ii)]{Peres}, but we have to make a number of modifications. The following lemma is a version of \cite[Lemma 5]{Peres}
in the form convenient for us.

\begin{lemma} \label{lem-seq}
{\bf (i)} Let $1<\eta<2$. Suppose that $\{\gam(n)\}_{n=1}^\infty$ is a real sequence such that
\begin{equation} \label{eq00}
C_1:=\sup_n |\gam(n)-\gam(n-1)| < \infty
\end{equation}
and for all $n\ge n_0$,
\begin{equation} \label{eq01}
\gam(n) \ge \frac{\gam(2n)}{2} + \frac{n}{(\log_2(2n))^\eta}\,.
\end{equation}
Then either there exists $c>0$ such that for all $n\ge n_0$
\be \label{eq0111}
\gam(2n) \ge c\frac{2n}{(\log_2 (2n))^{\eta-1}}\,,
\ee
or there exists $\eps>0$ such that for infinitely many $n$,
\begin{equation} \label{eq012}
\gam(2n) \le -\eps n\ \ \ \mbox{and}\ \ \ \gam(n) - \frac{\gam(2n)}{2} \le \frac{n}{\log_2(2n)}\,.
\end{equation}

{\bf (ii)} For any real sequence $\{\gam(n)\}_{n=1}^\infty$ satisfying (\ref{eq00}),
\begin{equation} \label{eq0121}
\gam(n) - \frac{\gam(2n)}{2} < \frac{n}{\log_2(2n)}
\end{equation}
for infinitely many $n$.
\end{lemma}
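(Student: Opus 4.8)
The plan is to pass to the normalized ratios $b(m):=\gam(m)/m$ and to analyze them along the doubling orbits $m,2m,4m,\dots$. The hypothesis (\ref{eq00}) gives $|\gam(m)|\le|\gam(1)|+C_1(m-1)$, hence $b$ is bounded, say $|b(m)|\le C_0$; this boundedness is the only property of $b$ that (\ref{eq00}) is ever used for. I would prove part \textbf{(ii)} first, since it contains the mechanism that also drives the hard case of part \textbf{(i)}.

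\textbf{Part (ii).} Suppose for contradiction that (\ref{eq0121}) fails for all large $n$, i.e.\ $\gam(n)-\gam(2n)/2\ge n/\log_2(2n)$ for $n\ge N$. Dividing by $n$ gives $b(n)-b(2n)\ge 1/\log_2(2n)$, and restricting to $n=2^j$ yields $b(2^j)-b(2^{j+1})\ge 1/(j+1)$. Summing over $j$ and using the divergence of the harmonic series forces $b(2^j)\to-\infty$, contradicting the boundedness of $b$. Hence (\ref{eq0121}) holds for infinitely many $n$.

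\textbf{Part (i).} Rewriting (\ref{eq01}) as $b(m)-b(2m)\ge(\log_2(2m))^{-\eta}$ shows that for each fixed $m\ge n_0$ the sequence $b(2^jm)$ is nonincreasing in $j$ and bounded below, hence converges to a limit $\beta(m)$; I set $g(m):=m\beta(m)=\lim_{j\to\infty}2^{-j}\gam(2^jm)$. Summing the decrements from $j=0$ and comparing $\sum_{j\ge0}(j+1+\log_2 m)^{-\eta}$ with $\int_0^\infty(x+1+\log_2 m)^{-\eta}\,dx$ (this is where $\eta>1$ enters) gives the basic estimate
\[
\gam(m)\ \ge\ g(m)+\frac{c_\eta\, m}{(\log_2(2m))^{\eta-1}},\qquad m\ge n_0,
\]
with $c_\eta=1/(\eta-1)$. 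I then split on the sign of $g$. If $g(m)\ge0$ for all $m\ge n_0$, then applying the displayed estimate at $m=2n$ and using $\log_2(4n)\le 2\log_2(2n)$ produces (\ref{eq0111}) with $c=c_\eta 2^{-(\eta-1)}$, which is the first alternative.

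It remains to treat the case $g(m_0)<0$ for some $m_0\ge n_0$ and to show that it yields the second alternative (\ref{eq012}); this is the main obstacle, because (\ref{eq012}) demands that \emph{both} of its inequalities hold simultaneously for infinitely many $n$. Along the orbit of $m_0$ one has $b(2^jm_0)\downarrow\beta(m_0)<0$, so $\gam(2^{j+1}m_0)\le 2^jm_0\,\beta(m_0)$ for all large $j$; writing $n=2^jm_0$, this is the first inequality of (\ref{eq012}) with $\eps=|\beta(m_0)|$, valid for every large $j$. For the second inequality I would run the localized version of the part \textbf{(ii)} mechanism: the positive quantities $e_j:=2^{-j}\bigl(\gam(2^jm_0)-\half\gam(2^{j+1}m_0)\bigr)$ telescope to $\sum_{j\ge0}e_j=\gam(m_0)-g(m_0)<\infty$, so a convergent series of positive terms cannot remain above the divergent comparison series $m_0/(j+1+\log_2 m_0)$; hence $e_j\le m_0/(j+1+\log_2 m_0)$ for infinitely many $j$, which is exactly the second inequality of (\ref{eq012}) at $n=2^jm_0$. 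Since the first inequality holds for all large $j$ and the second for infinitely many $j$, both hold at the same $n=2^jm_0$ for infinitely many $j$, giving (\ref{eq012}). The delicate point throughout is the bookkeeping that makes the two alternatives exhaustive via the single auxiliary quantity $g$, together with matching the two inequalities of (\ref{eq012}) on a common infinite set of indices.
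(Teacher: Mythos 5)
Your proof is correct and takes essentially the same route as the paper's: both telescope the doubling inequality (\ref{eq01}) along the orbits $n,2n,4n,\dots$, split into cases by sign (your dichotomy on the sign of the limit $g(m)=\lim_j 2^{-j}\gamma(2^j m)$ is equivalent to the paper's dichotomy on the sign of $\gamma$ itself, since $2^{-j}\gamma(2^jm)$ decreases in $j$), and extract the ``infinitely often'' inequality in (\ref{eq012}) and in part (ii) from the divergence of the harmonic-type series against a telescoping sum that stays bounded. The only cosmetic difference is that you package the iteration via the normalized ratios $\gamma(m)/m$ and their limits, whereas the paper works with finite iterates and the linear bound $|\gamma(i)|\le C_1 i$ directly.
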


\begin{proof}
(i) Iterating (\ref{eq01}) we obtain for $n\ge n_0$ and $m\ge 1$:
\begin{equation} \label{eq02}
\gam(n) \ge \frac{\gam(2^m n)}{2^m} + n \cdot\sum_{j=1}^m \frac{1}{(j + \log_2 n)^\eta}\,.
\end{equation}

\noindent
\underline{Case 1:} $\gam(n)\ge 0$ for all $n\ge n_0$. Then (\ref{eq02}) implies for $n\ge n_0$:
$$
\gam(n) \ge n \sum_{j=1}^\infty \frac{1}{(j + \log_2 n)^\eta} \ge c \frac{n}{(\log_2 n)^{\eta-1}}\,,
$$
whence (\ref{eq0111}) holds.

\noindent
\underline{Case 2:} there exists $n_1\ge n_0$ such that $\gam(n_1) < -\eps<0$. Then (\ref{eq01}) implies $\gam(2n_1) \le 2\gam(n_1)
< -2\eps$, and
inductively, $\gam(2^m n_1) \le -2^m \eps$ for all $m\ge 1$. Moreover, for infinitely many $m$ we have
$$
\gam(2^{m-1}n_1) - \frac{\gam(2^m n_1)}{2} \le \frac{2^{m-1}n_1}{\log_2 (2^m n_1)}\,,
$$
since otherwise,
$$
\frac{\gam(2^{m-1}n_1)}{2^{m-1}} - \frac{\gam(2^m n_1)}{2^m} > \frac{n_1}{m + \log n_1},\ \ m\ge m_0+1,
$$
and then taking the sum over $m$ from $m_0+1$ to $\ell$ yields
$$
\frac{\gam(2^{m_0}n_1)}{2^{m_0}} - \frac{\gam(2^{m_0+\ell}n_1)}{2^{m_0+\ell}} \to \infty,\ \ \ell\to \infty,
$$
which is a contradiction, since $|\gam(i)|\le C_1i$ by (\ref{eq00}).
Thus, (\ref{eq012}) holds for infinitely many $n=2^m n_1$, as desired.

(ii) If the claim is not true, then (\ref{eq01}) holds for $n\ge n_0$ with $\eta=1$, for some $n_0\in \N$. 
Then we obtain (\ref{eq02}) with $\eta=1$. But $\gam(2^mn) \ge -C_12^mn$ by
(\ref{eq00}), and we get a contradiction letting $m\to \infty$.
\qed\end{proof}

\medskip

We still use the measure $\Pmu$ from (\ref{eq-meas1}), as in \cite{KPS1}, so by (\ref{eq-pmu}), keeping in mind that
$s=-\log_2p$, we have
\begin{equation} \label{equ1}
\log_2\Pmu[x_1^{2n}] + s(2n) = \bigl[N_0(x_1^{2n})/2-N_0(x_1^n)\bigr]s.
\end{equation}
Observe that
\begin{equation} \label{equ2}
N_0(x_1^n) = \sum_{k=1}^{\ell+1} \sum_{\frac{n}{2^{k}} < i \le \frac{n}{2^{k-1}},\ i\ \mbox{\tiny odd}} N_0(x_1^n|_{J_i}),\ \ \ 
2^{\ell-1} < n \le 2^\ell.
\end{equation}
By the definition of the measure $\Pmu$, the random variables $N_0(x_1^n|_{J_i})$ are i.i.d.\ for odd 
$i\in (\frac{n}{2^k},\frac{n}{2^{k-1}}]$. Note that $|x_1^n|_{J_i}|=k$ for such $i$, and the distribution of these random variables is
the distribution of $N_0(u)$, $|u|=k$, where $\{u_i\}$ is the Markov chain corresponding to $\mu$. By the definition of $\mu=\mu(p)$,
$$
\Exp\bigl[N_0[u]\bigr] = \sum_{j=0}^{k-1} (\bp P^j)_0,\ \ |u|=k,
$$
where $\bp = (p, 1-p)$ and $P = \left( \begin{array}{cc} p & 1-p \\ 1 & 0 \end{array} \right)$. Since $P$ has left eigenvectors
$\bpi = (\frac{1}{2-p}, \frac{1-p}{2-p})$ and $\btau = (1,-1)$ corresponding to the eigenvalues 1 and $p-1$, respectively, we have
$$
(\bp P^j)_0 = \frac{1}{2-p}\, [1- (p-1)^{j+2}],\ j\ge 0,
$$
hence
\begin{equation} \label{eq-exp}
\Exp \bigl[N_0[u]\bigr] = \frac{k}{2-p} - \frac{1}{2-p}\, \sum_{j=0}^{k-1} (p-1)^{j+2} = \frac{k}{2-p} - \frac{(1-(p-1)^k)(p-1)^2}{(2-p)^2}=:L_k.
\end{equation}

\begin{lemma} \label{lem-exp}
We have 
$$
\left|\frac{\Exp\bigl[N_0(x_1^{2n})\bigr]}{2} - \Exp\bigl[N_0(x_1^{n})\bigr]\right| \le C(\log_2 n)^2,\ \ n\in \N,
$$
for some $C>0$, where $x$ has the law of $\Pmu$.
\end{lemma}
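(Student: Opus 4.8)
The plan is to compute both expectations exactly from the block decomposition (\ref{equ2}) together with the formula (\ref{eq-exp}), and to track how the block structure changes when $n$ is doubled. Write $O_k(n)$ for the number of odd $i$ with $n/2^{k} < i \le n/2^{k-1}$; by (\ref{equ2}) and (\ref{eq-exp}) these are exactly the odd $i\le n$ for which $x_1^n|_{J_i}$ has length $k$, and each contributes $L_k$ to the expectation, so that $\Exp[N_0(x_1^n)] = \sum_{k=1}^{\ell+1} O_k(n) L_k$ for $2^{\ell-1} < n \le 2^\ell$. The elementary but decisive observation is that passing from $n$ to $2n$ lengthens every block $J_i$ with $i\le n$ by exactly one symbol (if $2^{k-1}i \le n < 2^k i$ then $2^k i \le 2n < 2^{k+1}i$), while each odd $i$ with $n < i \le 2n$ produces a brand-new block of length $1$. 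Hence $\Exp[N_0(x_1^{2n})] = \sum_{k\ge 1} O_k(n) L_{k+1} + O_0(n) L_1$, where $O_0(n)$ denotes the number of odd integers in $(n,2n]$.

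Subtracting, I would obtain
\begin{equation*}
\frac{\Exp[N_0(x_1^{2n})]}{2} - \Exp[N_0(x_1^n)] = \sum_{k\ge 1} O_k(n)\Bigl(\tfrac{1}{2}L_{k+1} - L_k\Bigr) + \tfrac{1}{2}O_0(n) L_1 .
\end{equation*}
Now the number of odd integers in a dyadic block satisfies $O_k(n) = n\,2^{-(k+1)} + e_k(n)$ with $|e_k(n)|\le 1$, and likewise $O_0(n) = n/2 + O(1)$. Splitting off the part of the first sum proportional to $n$, the leading contribution is $n\sum_{k\ge 1} 2^{-(k+1)}(\tfrac12 L_{k+1} - L_k)$.

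The heart of the argument is that this leading term exactly cancels the contribution of the new length-$1$ blocks. Setting $c_k := L_k 2^{-(k+1)}$, the summand telescopes, $2^{-(k+1)}(\tfrac12 L_{k+1} - L_k) = c_{k+1} - c_k$, and since $L_k = O(k)$ by (\ref{eq-exp}) one has $c_k \to 0$, so $\sum_{k\ge 1} 2^{-(k+1)}(\tfrac12 L_{k+1}-L_k) = -c_1 = -L_1/4$. Because $L_1 = p$ by (\ref{eq-exp}), the leading term equals $-np/4$, which matches $\tfrac12 O_0(n)L_1 = np/4 + O(1)$ up to an $O(1)$ error. With the $O(n)$ terms gone, only lower-order contributions survive: the counting errors give $\sum_{k=1}^{\ell+1}|e_k(n)|\,|\tfrac12 L_{k+1}-L_k| \lesssim \sum_{k\le \log_2 n} k \lesssim (\log_2 n)^2$ (using $|\tfrac12 L_{k+1}-L_k|\lesssim k$), while the tail of the main sum beyond $k=\ell+1$ is $\lesssim n\,\ell\,2^{-\ell}\lesssim \log_2 n$ since $n\le 2^\ell$. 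Collecting these bounds yields the claimed $C(\log_2 n)^2$.

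I expect the main obstacle to be precisely the exact cancellation above: a priori the difference $\tfrac12\Exp[N_0(x_1^{2n})] - \Exp[N_0(x_1^n)]$ carries a term linear in $n$, and only the telescoping identity $\sum_k 2^{-(k+1)}(\tfrac12 L_{k+1}-L_k) = -L_1/4$ together with the match $L_1 = p$ and the count $O_0(n) = n/2 + O(1)$ forces it to vanish; without this one would be left with the useless bound $O(n)$. Everything else is routine, provided one keeps in hand the elementary estimate $|O_k(n) - n2^{-(k+1)}|\le 1$ and the linear growth $L_k = O(k)$.
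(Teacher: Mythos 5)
Your proof is correct, and it takes a genuinely different route from the paper's. You pair each block of $x_1^n$ with its extension in $x_1^{2n}$: every $J_i$-block with odd $i\le n$ gains exactly one symbol, and the odd $i\in(n,2n]$ create new length-one blocks, giving $\tfrac12\Exp[N_0(x_1^{2n})]-\Exp[N_0(x_1^n)]=\sum_{k\ge1}O_k(n)\bigl(\tfrac12 L_{k+1}-L_k\bigr)+\tfrac12 O_0(n)L_1$. Since $\tfrac12 L_{k+1}-L_k$ grows linearly in $k$, this expression a priori carries a term linear in $n$, which you remove by the telescoping identity $\sum_{k\ge1}2^{-(k+1)}\bigl(\tfrac12 L_{k+1}-L_k\bigr)=-L_1/4$ (valid since $L_k2^{-(k+1)}\to0$), cancelling exactly against $\tfrac12 O_0(n)L_1=nL_1/4+O(1)$; the counting errors and the tail then give $O((\log_2 n)^2)$ as required. (A small remark: the identification $L_1=p$ plays no role here --- the cancellation is between $-nL_1/4$ and $+nL_1/4$ whatever the value of $L_1$.) The paper instead pairs blocks of \emph{equal length}: by (\ref{equ2}) and (\ref{eq-exp}) the difference equals $\sum_{k=1}^{\ell+1}\bigl(\tfrac12\#\Z_{\rm odd}(\frac{n}{2^{k-1}},\frac{n}{2^{k-2}}]-\#\Z_{\rm odd}(\frac{n}{2^{k}},\frac{n}{2^{k-1}}]\bigr)L_k+dL_{\ell+2}$ with $d\in\{0,1/2\}$, and by (\ref{eq-easy}) each coefficient is $O(1)$ because both counts equal $\frac{n}{2^{k+1}}+O(1)$; the bound $2\sum_{k\le \ell+2}L_k=O(\ell^2)=O((\log_2 n)^2)$ is then immediate, with no global cancellation to engineer. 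Both arguments rest on the same ingredients --- the decomposition (\ref{equ2}), the linear growth $L_k=O(k)$ from (\ref{eq-exp}), and the counting estimate (\ref{eq-easy}) --- but the paper's pairing makes the linear-in-$n$ parts cancel per block length automatically, whereas yours concentrates the cancellation into one identity; your version is slightly longer, though it does make transparent where the dangerous $O(n)$ term comes from and why it vanishes.
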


\begin{proof}
Denote by $\Z_{\rm odd}(a,b]$ the set of odd integers in the interval $(a,b]$, where $a<b$ are reals. 
We have from (\ref{equ2}) and (\ref{eq-exp}):
$$
\Exp\bigl[N_0(x_1^{n})\bigr] = \sum_{k=1}^{\ell+1} \#\Z_{\rm odd}(\textstyle{\frac{n}{2^k}, \frac{n}{2^{k-1}}}]\cdot L_k.
$$
Note that $\Z_{\rm odd}(\textstyle{\frac{n}{2^{\ell+1}}, \frac{n}{2^{\ell}}}]=\{1\}$ 
if $n=2^\ell$, and it is empty otherwise. It follows that
\begin{equation} \label{equ3}
\frac{\Exp\bigl[N_0(x_1^{2n})\bigr]}{2} - \Exp\bigl[N_0(x_1^{n})\bigr] = \sum_{k=1}^{\ell+1} 
\Bigl(\frac{\#\Z_{\rm odd}(\textstyle{\frac{n}{2^{k-1}}, \frac{n}{2^{k-2}}}]}{2} - 
\#\Z_{\rm odd}(\textstyle{\frac{n}{2^k}, \frac{n}{2^{k-1}}}]\Bigr)
\cdot L_k + d\cdot L_{\ell+2},
\end{equation}
where $d\in \{0,1/2\}$. It is easy to see that 
\begin{equation} \label{eq-easy}
\left| \#\Z_{\rm odd}(a,b] - \Bigl(\frac{b-a}{2}\Bigr)\right| \le 1,\ \ \ \mbox{for}\ \ 0<a<b,
\end{equation}
hence, taking (\ref{eq-exp}) into account, 
$$
\left|\frac{\Exp\bigl[N_0(x_1^{2n})\bigr]}{2} - \Exp\bigl[N_0(x_1^{n})\bigr]\right| \le 2\sum_{k=1}^{\ell+2} L_k \le C'\sum_{k=1}^{\ell+2} k \le
C''\ell^2\le C (\log_2 n)^2.
$$
\qed\end{proof}

\medskip

\begin{proof}[Proof of Theorem \ref{th-gauge}(ii)]
In order to show that $\Hk^{\psi_\theta}(X_G)=0$, we cover $X_G$ by three subsets: $B$, $L$, and $\Lam$, defined as follows. Let
\begin{equation} \label{eq-defB}
B:= \left\{x\in X_G:\exists\,\eta>\theta,\ \ \frac{N_0(x_1^{2n})}{2} - N_0(x_1^n) >  \frac{-2n}{(\log_2(2n))^\eta}\ 
\mbox{for infinitely many $n$}\right\}.
\end{equation}
Denote
$$
N_0^*(x_1^n) := N_0(x_1^n) - \Exp\bigl[N_0(x_1^{n})\bigr]
$$
and let
\begin{eqnarray} \label{eq-defL}
L:= \Bigl\{x\in X_G:\ \exists\,\eps>0, &  & \frac{N_0(x_1^{2n})}{2} - N_0(x_1^n) \ge  \frac{-2n}{\log_2 (2n)}\ \ \ \mbox{and} \nonumber \\
                                       &  & 
N_0^*(x_1^{2n}) \le -\eps n\ \ \ \mbox{for infinitely many $n$}\Bigr\}. 
\end{eqnarray}
Finally, let $\Lam=X_G\setminus (L\cup B)$.
It suffices to verify that each of the three sets $B,L,\Lam$ has zero $\Hk^{\psi_\theta}$-measure (indeed, $L$ and $\Lam$
even have zero $\Hk^s$-measure).

\medskip

\noindent \underline{Step 1:} $\Hk^{\psi_\theta}(B)=0$. 
Let $B_\eta$ be the set of $x\in X_G$ such that the condition in (\ref{eq-defB}) holds for a fixed $\eta$. Thus, $B=\bigcup_{\eta>\theta} B_\eta=
\bigcup_{\eta\in \Q,\ \eta>\theta} B_\eta$, and it is enough to show that $\Hk^{\psi_\theta}(B_\eta)=0$. 
We have from (\ref{equ1}) and the definition of $\psi_\theta$ for all $x\in B_\eta$:
\begin{eqnarray*}
\log_2\Pmu[x_1^{2n}]-\log_2\psi_\theta(2^{-2n}) & = & s\Bigl(\frac{N_0(x_1^{2n})}{2}-N_0(x_1^n)\Bigr) + \frac{2n}{(\ln 2)(\log_2 (2n))^\theta} \\
                                    & > & \frac{-2ns}{(\log_2 (2n))^\eta} + \frac{2n}{(\ln 2)(\log_2 (2n))^\theta}
\end{eqnarray*}
for infinitely many $n$. Since $\eta>\theta$, it follows that
$$
\limsup_{n\to\infty} \bigl(\log_2\Pmu[x_1^{2n}]-\log_2\psi_\theta(2^{-2n})\bigr)=+\infty,
$$
hence $\Hk^{\psi_\theta}(B_\eta)=0$ by Theorem~\ref{th-density}.

\medskip

\noindent \underline{Step 2:} $\Hk^{\psi_\theta}(L)=0$. Denote by $L(\eps)$ the set of points $x\in X_G$ which satisfy the condition
in (\ref{eq-defL}) for a given $\eps>0$. For $\eps>0$ and $n\in \N$ let $\Lk_n(\eps)$ be the set of words $u$ of length $2n$ for which 
the condition in (\ref{eq-defL}) holds. (Note that this condition depends only on the first $2n$ symbols of $x$; thus, (\ref{eq-defL}) holds
for all $x\in [u]$.) If $u\in \Lk_n(\eps)$ then by  (\ref{equ1}) and (\ref{eq-defL}),
$$
\log_2(\Pmu[u]\, 2^{2ns}) \ge \frac{-2sn}{\log_2(2n)}\,,
$$
hence
\begin{equation} \label{eq111}
2^{-2ns} \le \exp\left(\frac{2sn(\ln 2)}{\log_2(2n)}\right) \Pmu[u].
\end{equation}
By the definition of $\Lk_n(\eps)$ 
we have
\begin{equation} \label{eq1111}
\sum_{u\in \Lk_n(\eps)} \Pmu[u]\le \Pmu\bigl(x:\ N_0^*(x_1^{2n})\le -\eps n\bigr).
\end{equation}

The following lemma is a consequence of large deviation estimates; it will be used in the last step of the proof as well.
\begin{lemma}\label{lem-ldev2}
There exist $c_2,c_3>0$ such that for all $t>0$ and $n\in \N$,
$$
\Pmu\bigl(x:\ |N_0^*(x_1^{2n})|\ge t n\bigr)\le c_2\exp(-c_3t^2 n).
$$
\end{lemma}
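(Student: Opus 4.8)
The plan is to exploit the product structure of $\Pmu$. By the decomposition (\ref{equ2}), applied with $2n$ in place of $n$, the variable $N_0(x_1^{2n})$ is a sum of independent random variables $Y_i := N_0(x_1^{2n}|_{J_i})$ over odd $i\le 2n$. For $i$ in the level-$k$ block, i.e.\ $\frac{2n}{2^k} < i \le \frac{2n}{2^{k-1}}$, the restriction $x_1^{2n}|_{J_i}$ is a word of length $k$, so $Y_i$ takes values in $\{0,1,\dots,k\}$. Consequently the centered sum $N_0^*(x_1^{2n}) = \sum_i (Y_i - \Exp Y_i)$ is a sum of independent, mean-zero random variables whose ranges are bounded by $k$ on the level-$k$ block. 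This is precisely a Hoeffding setting, the only twist being that the pointwise bounds grow with the block index.

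Rather than applying the uniform inequality of Lemma~\ref{lem-ldev} block by block --- which would cost a spurious $\log_2 n$ factor from the union bound over the $\approx \log_2 n$ levels, and would not survive uniformly for small $t$ --- I would use the moment-generating-function estimate underlying \cite{Hoeffding}: for a mean-zero $Y$ with $a\le Y\le b$ one has $\Exp[e^{\lambda Y}] \le \exp(\lambda^2 (b-a)^2/8)$. Multiplying over the independent summands gives
$$
\Exp\bigl[\exp(\lambda N_0^*(x_1^{2n}))\bigr] \le \exp(\lambda^2 V/8), \qquad V := \sum_i (b_i-a_i)^2 \le \sum_{k=1}^{\ell+1} \#\Z_{\rm odd}\bigl(\tfrac{2n}{2^k},\tfrac{2n}{2^{k-1}}\bigr]\cdot k^2,
$$
where $2^{\ell-1} < 2n \le 2^\ell$.

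The crux of the argument is the bound $V\le Cn$ for an absolute constant $C$. By (\ref{eq-easy}) the number of odd integers in a level-$k$ block is at most $\frac{n}{2^k}+1$, hence
$$
V \le \sum_{k=1}^{\ell+1}\Bigl(\frac{n}{2^k}+1\Bigr)k^2 \le n\sum_{k=1}^\infty \frac{k^2}{2^k} + (\ell+1)^3 ,
$$
and since $\sum_{k\ge1} k^2 2^{-k} = 6$ and $(\ell+1)^3 = O((\log_2 n)^3) = O(n)$, we obtain $V\le Cn$. This is the heart of the matter: the geometric decay $2^{-k}$ in the number of independent copies at level $k$ dominates the quadratic growth $k^2$ of their squared ranges, keeping the variance proxy linear in $n$.

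With $V\le Cn$ in hand, the Chernoff bound $\Prob(N_0^*(x_1^{2n})\ge s) \le \inf_{\lambda>0} e^{-\lambda s+\lambda^2 V/8} = e^{-2s^2/V}$, taken at $s=tn$, yields $\Pmu(N_0^*(x_1^{2n})\ge tn)\le \exp(-2t^2 n/C)$. Running the same computation for $-N_0^*$ and adding the two estimates gives the two-sided bound with $c_2=2$ and $c_3=2/C$, valid for all $t>0$ and $n\in\N$. The only real obstacle is the estimate $V\le Cn$; the rest is the standard sub-Gaussian machinery, and it is exactly the absence of a $\log n$ prefactor (afforded by the MGF form rather than a per-level union bound) that makes the conclusion hold uniformly in $t$ and $n$.
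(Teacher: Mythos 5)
Your proof is correct, but it takes a genuinely different route from the paper's. Both arguments start from the same decomposition (\ref{equ2}) of $N_0^*(x_1^{2n})$ into independent block sums, with the level-$k$ summands bounded by $k$ in modulus and roughly $n/2^k$ of them per level. The paper, however, stays with the tail-bound form of Hoeffding's inequality as stated in Lemma~\ref{lem-ldev}: it applies it separately to each block sum $S^*_{A_k}$, splitting the deviation $tn$ across levels with the weights $\frac{1}{k(k+1)}$ (using $\sum_{k\ge 1}\frac{1}{k(k+1)}=1$), which gives per-level bounds $2\exp\bigl[-t^2n\,2^k/(2k^4(k+1)^2)\bigr]$, and then sums over $k$. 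Note that this weighted splitting is precisely what avoids the spurious $\log_2 n$ factor you were worried about, so your objection applies to a naive equal-split union bound, not to the paper's actual argument; on the other hand, the paper's final summation $\sum_k \exp\bigl[-t^2n\,2^k/(2k^4(k+1)^2)\bigr]\le c_2\exp(-c_3t^2n)$ does require absorbing the regime of small $t^2n$ into the constant $c_2$ (the bound being trivial there since probabilities are at most $1$), a point the paper glosses over and which your argument sidesteps entirely. Your route --- Hoeffding's moment-generating-function lemma applied once to the whole sum, with the variance proxy $V\le Cn$ obtained because the geometric decay $2^{-k}$ of block cardinalities beats the quadratic growth $k^2$ of squared ranges --- is the cleaner sub-Gaussian computation: it yields explicit constants ($c_2=2$, $c_3=2/C$), holds uniformly in $t$ and $n$ with no case analysis, and needs no union bound over levels. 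The only trade-off is that it invokes the MGF form of Hoeffding's lemma, which is classical but not literally the inequality (\ref{eq-hoeffding}) quoted in the paper.
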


\begin{proof}
We have by (\ref{equ2}),
$$
N_0^*(x_1^{2n}) = \sum_{k=1}^{\ell+1} S^*_{A_k},\ \ \mbox{where}\ \ A_k = 
\#\Z_{\rm odd}(\textstyle{\frac{n}{2^k}, \frac{n}{2^{k-1}}}],
$$
$$
S^*_{A_k}:= \sum_{\frac{n}{2^{k}} < i \le \frac{n}{2^{k-1}},\ i\ \mbox{\tiny odd}} N_0^*(x_1^n|_{J_i}),
$$
and
$$
N_0^*(x_1^n|_{J_i}) = N_0(x_1^n|_{J_i}) - \Exp [N_0(u)] \ \ \mbox{for}\ \ |u|=k\ \ \mbox{and}\ \ 
i\in \Z_{\rm odd}(\textstyle{\frac{n}{2^k}, \frac{n}{2^{k-1}}}]. 
$$
Now,
$$
\Pmu\Bigl( \Bigl|\sum_{k=1}^{\ell+1} S^*_{A_k}\Bigr| \ge t n\Bigr) \le \sum_{k=1}^{\ell+1} \Pmu\Bigl( \Bigl|S^*_{A_k}\Bigr| \ge
\frac{t n}{k(k+1)}\Bigr),
$$
since $\sum_{k=1}^\infty \frac{1}{k(k+1)}=1$.
Note that $S^*_{A_k}$ is a sum of $A_k$ independent random variables, which are bounded by $k$ in modulus, hence by Hoeffding's inequality 
(\ref{eq-hoeffding}),
\begin{eqnarray*}
\Pmu\Bigl( \Bigl|S^*_{A_k}\Bigr| \ge \frac{t n}{k(k+1)}\Bigr) & = & \Pmu\Bigl(| S^*_{A_k}|\ge A_k\cdot \frac{t n}{k(k+1)A_k}\Bigr)\\
                                                    & \le & 2\exp\Big[-\frac{t^2 n^2}{2k^4(k+1)^2 A_k}\Bigr].
\end{eqnarray*}
Observe that $A_k \le \frac{n}{2^{k+1}}+1\le \frac{n}{2^k}$ by (\ref{eq-easy}), hence
$$
\Pmu\Bigl( \Bigl|S^*_{A_k}\Bigr| \ge \frac{t n}{k(k+1)}\Bigr)\le 2\exp\Big[-\frac{t^2 n\cdot 2^k}{2k^4(k+1)^2}\Bigr],
$$
and, therefore,
$$
\Pmu\bigl(x:\ |N_0^*(x_1^{2n})|\ge t n\bigr)\le 2\sum_{k=1}^\infty \exp\Big[-\frac{t^2 n\cdot 2^k}{2k^4(k+1)^2}\Bigr]\le
c_2\exp(-c_3t^2 n),
$$
for some positive $c_2, c_3$, as desired.
\qed\end{proof}

Combining (\ref{eq111}), (\ref{eq1111}) and Lemma~\ref{lem-ldev2}, with $t=\eps$, yields
$$
\sum_{u\in \Lk_n(\eps)} (2^{-2n})^s \le c_2\exp\Bigl[\frac{2sn(\ln 2)}{\log_2(2n)} -c_3\eps^2 n\Bigr].
$$
The right-hand side of this inequality  is summable in $n$, so by choosing large $n_0$ we can make the sum
$$
\sum_{n\ge n_0} \sum_{u\in \Lk_n(\eps)} (2^{-2n})^s = \sum_{n\ge n_0} \sum_{u\in \Lk_n(\eps)} (\diam[u])^s
$$
arbitrarily small. But for any $n_0$, the union $\bigcup_{n=n_0}^\infty \bigcup_{u\in \Lk_n(\eps)} [u]$ forms a cover of $L(\eps)$, proving that
$\Hk^s(L(\eps))=0$. Finally, $L = \bigcup_{\eps\in \Q} L(\eps)$, so we
obtain that $\Hk^s(L)=0$ and certainly $\Hk^{\psi_\theta}(L)=0$.

\medskip

\noindent \underline{Step 3.} For $\eta\in (1,2)$, $\eps\in (0,\eta)$, and $c>0$, let $\Lam(\eta,\eps,c)$ be the set of $x\in X_G$ such that for
$n$ sufficiently large we have
\begin{equation} \label{eq112}
\frac{N_0(x_1^{2n})}{2} - N_0(x_1^n) \le \frac{-2n}{(\log_2(2n))^\eta}
\end{equation}
and
\begin{equation} \label{eq113}
N_0^*(x_1^{2n}) \ge c\frac{2n}{(\log_2(2n))^{\eta-1}},
\end{equation}
but for infinitely many $n$,
\begin{equation} \label{eq114}
\frac{N_0(x_1^{2n})}{2} - N_0(x_1^n) > \frac{-2n}{(\log_2(2n))^{\eta-\eps}}.
\end{equation}
By Lemma~\ref{lem-seq}(ii), applied to $\{N_0(x_1^n)\}_{n\ge 1}$, (\ref{eq114}) certainly holds for $\eps=\eta-1$.

We claim that 
\begin{equation} \label{eq115}
X_G\setminus (B\cup L) \subset \bigcup_{\eta\in (1,2)} \bigcup_{c>0} \bigcup_{\eps\in (0,2-\eta)} \Lam(\eta,\eps,c).
\end{equation}
Indeed, for $x\in X_G\setminus B$ let $\eta^*$ be the infimum of $\eta$ for which (\ref{eq112}) holds for $n$ sufficiently large (note that
$x\not\in B$ means such $\eta$ exists). Then $\eta^*\in [1,2)$ by Lemma~\ref{lem-seq}(ii),
and (\ref{eq112}) holds with $\eta = \eta^* + \frac{2-\eta^*}{3}$ for  $n$ sufficiently large, whereas (\ref{eq114}) holds for 
$\eps\in (\eta-\eta^*,2-\eta)=(\frac{2-\eta^*}{3},\frac{2(2-\eta^*)}{3})$. 
Let
$$
\gam(n):= N_0^*(x_1^{n}),\ \ n\ge 1.
$$
It is clear tthat
$$
|\gam(n+1)-\gam(n)| \le 2,\ \ n\ge 1.
$$
It follows from (\ref{eq112}) and Lemma~\ref{lem-exp} that
$$
\gam(n) - \frac{\gam(2n)}{2} \ge \frac{2n}{(\log_2(2n))^\eta}-C(\log_2 n)^2 \ge \frac{n}{(\log_2(2n))^\eta}
$$
for $n$ sufficiently large. Thus, the sequence $\{\gam(n)\}_{n\ge 1}$ satisfies the assumptions of Lemma~\ref{lem-seq}(i).
By Lemma~\ref{lem-seq}(i),
either there exists $c>0$ such that for all  $n$ sufficiently large
$$
N_0^*(x_1^{2n})\ge c\,\frac{2n}{(\log_2(2n))^{\eta-1}}\,,
$$
which together with the above yields that $x\in \Lam(\eta,\eps,c)$, or else there exists $\eps>0$
such that for infinitely many $n$,
$$
N_0^*(x_1^{2n})\le -\eps n\ \ \ \mbox{and}\ \ \ N^*_0(x_1^n)-\frac{N^*_0(x_1^{2n})}{2}\le \frac{n}{\log_2(2n)}\,,
$$
hence by  Lemma~\ref{lem-exp},
$$
N_0(x_1^n) - \frac{N_0(x_1^{2n})}{2} \le \frac{n}{\log_2(2n)} + c(\log_2 n)^2 \le \frac{2n}{\log_2(2n)}
$$
for infinitely many $n$, so that $x\in L$,
proving the claim.
Since the union in (\ref{eq115}) can be taken over rational $\eta,c,\eps$, it suffices to show that $\Hk^s(\Lam(\eta,\eps,c))
=0$ for $\eps\in (0,2-\eta)$.

Let $\Gam_{n}(\eta,\eps,c)$ be the collection of words $u$ of length $2n$ for which (\ref{eq112}), (\ref{eq113}), and (\ref{eq114}) hold
(as before, this is well defined). If $u\in \Gam_{n}(\eta,\eps,c)$, then by (\ref{equ1}) and (\ref{eq114})
$$
\log_2(\Pmu[u]\,2^{2ns}) \ge \frac{-2ns}{(\log_2(2n))^{\eta-\eps}},
$$
hence
\begin{equation} \label{eq116}
2^{-2ns} \le \exp\Bigl(\frac{2ns(\ln 2)}{\log_2(2n)^{\eta-\eps}}\Bigr) \Pmu[u].
\end{equation}
By the definition of $\Gam_{n}(\eta,\eps,c)$ and Lemma~\ref{lem-ldev2}, with $t=\frac{2c}{(\log_2(2n))^{\eta-1}}$,
\begin{eqnarray*}
\sum_{u\in \Gam_{n}(\eta,\eps,c)} \Pmu[u] & \le & \Pmu\Bigl(x:\ N_1^*(x_1^{2n})\ge c\frac{2n}{(\log_2(2n))^{\eta-1}}\Bigr) \\
                                          & \le & c_2\exp\Bigl(-\frac{\wtil{c}n}{(\log_2(2n))^{2\eta-2}}\Bigr),
\end{eqnarray*}
with $\wtil{c}=4c_3c^2$. Combining this with (\ref{eq116}) yields
$$
\sum_{u\in \Gam_{n}(\eta,\eps,c)} 2^{-2ns} \le \exp c_2\Bigl[\frac{2ns(\ln 2)}{\log_2(2n)^{\eta-\eps}} - \frac{\wtil{c}n}{(\log_2(2n))^{2\eta-2}}\Bigr].
$$
Recall that $\eps<2-\eta$, and therefore $\eta-\eps> 2\eta-2$ and the right-hand side of the last inequality is summable in $n$. It follows that,
by taking $n_1$ sufficiently large, the sum
$$
\sum_{n\ge n_1} \sum_{u\in \Gam_{n}(\eta,\eps,c)} 2^{-2ns}
$$
can be made arbitrarily small. Since for any $n_1$, the union
$$
\bigcup_{n\ge n_1} \bigcup_{u\in \Gam_{n}(\eta,\eps,c)} [u]
$$
covers $\Lam(\eta,\eps,c)$, this implies $\Hk^s(\Lam(\eta,\eps,c))=0$ (and hence $\Hk^{\psi_\theta}(\Lam(\eta,\eps,c))=0$), completing the proof.
\qed\end{proof}

\medskip

\noindent
{\bf Acknowledgement.}
The research of B. S. was supported in part by the NSF grant DMS-0968879. He is grateful to the Microsoft Research Theory Group
for hospitality during 2010-2011. He would also like to thank the organizers of the conference ``Fractals and Related Fields II'' for
the excellent meeting and stimulating atmosphere. The authors are grateful to the referee for careful reading of the manuscript and many helpful
comments.
%
%
%

%
%
%

\end{document}